\newtheorem{remark}{Remark}[section]
\newtheorem{proposition}{Proposition}[section]
\newtheorem{bound}{Bound}[section]
\newtheorem{theorem}{Theorem}[section]
\theoremstyle{remark}
\newcommand{\appr}{\text{approx}}
\title{New Analysis and Results for the Frank-Wolfe Method}
\author{Robert M. Freund\thanks{MIT Sloan School of Management, 77 Massachusetts Avenue, Cambridge, MA   02139
(\href{mailto:rfreund@mit.edu}{rfreund@mit.edu}).  This author's research is supported by AFOSR Grant No. FA9550-11-1-0141 and the MIT-Chile-Pontificia Universidad Católica de Chile Seed Fund.}
\and Paul Grigas\thanks{MIT Operations Research Center, 77 Massachusetts Avenue, Cambridge, MA   02139
(\href{mailto:pgrigas@mit.edu}{pgrigas@mit.edu}).  This author's research has been partially supported through NSF Graduate Research Fellowship No. 1122374 and the MIT-Chile-Pontificia Universidad Católica de Chile Seed Fund.}}
\date{June 1, 2014}
\begin{document}
\maketitle

\begin{abstract}We present new results for the Frank-Wolfe method (also known as the conditional gradient method).  We derive computational guarantees for arbitrary step-size sequences, which are then applied to various step-size rules, including simple averaging and constant step-sizes.  We also develop step-size rules and computational guarantees that depend naturally on the warm-start quality of the initial (and subsequent) iterates.  Our results include computational guarantees for both duality/bound gaps and the so-called FW gaps.  Lastly, we present complexity bounds in the presence of approximate computation of gradients and/or linear optimization subproblem solutions.\end{abstract}

\section{Introduction}

The use and analysis of first-order methods in convex optimization has gained a considerable amount of attention in recent years.  For many applications -- such as LASSO regression, boosting/classification, matrix completion, and other machine learning problems -- first-order methods are appealing for a number of reasons. First, these problems are often very high-dimensional and thus, without any special structural knowledge, interior-point methods or other polynomial-time methods are unappealing.  Second, optimization models in many settings are dependent on data that can be noisy or otherwise limited, and it is therefore not necessary or even sensible to require very high-accuracy solutions.  Thus the weaker rates of convergence of first-order methods are typically satisfactory for such applications. Finally, first-order methods are appealing in many applications due to the lower computational burden per iteration, and the structural implications thereof.  Indeed, most first-order methods require, at each iteration, the computation of an exact, approximate, or stochastic (sub)gradient and the computation of a solution to a particular ``simple'' subproblem.  These computations typically scale well with the dimension of the problem and are often amenable to parallelization, distributed architectures, efficient management of sparse data-structures, and the like.\medskip

Our interest herein is the Frank-Wolfe method, which is also referred to as the conditional gradient method.  The original Frank-Wolfe method, developed for smooth convex optimization on a polytope, dates back to Frank and Wolfe \cite{frank-wolfe}, and was generalized to the more general smooth convex objective function over a bounded convex feasible region thereafter, see for example Demyanov and Rubinov \cite{DR1970}, Dunn and Harshbarger \cite{Dunn1978}, Dunn \cite{Dunn1979}, \cite{Dunn1980}, also Levitin and Polyak \cite{levitin-polyak} and Polyak \cite{polyak}.  More recently there has been renewed interest in the Frank-Wolfe method due to some of its properties that we will shortly discuss, see for example Clarkson \cite{clarkson}, Hazan \cite{hazan}, Jaggi \cite{jaggi2013revisiting}, Giesen et al. \cite{jaggi}, and most recently Harchaoui et al. \cite{harchaoui}, Lan \cite{lancomplex} and Temlyakov \cite{temlyakov}.  The Frank-Wolfe method is premised on being able to easily solve (at each iteration) linear optimization problems over the feasible region of interest.   This is in contrast to other first-order methods, such as the accelerated methods of Nesterov \cite{nest05smoothing,nesterovBook}, which are premised on being able to easily solve (at each iteration) certain projection problems defined by a strongly convex prox function.  In many applications, solving a linear optimization subproblem is much simpler than solving the relevant projection subproblem.  Moreover, in many applications the solutions to the linear optimization subproblems are often highly structured and exhibit particular sparsity and/or low-rank properties, which the Frank-Wolfe method is able to take advantage of as follows.  The Frank-Wolfe method solves one subproblem at each iteration and produces a sequence of feasible solutions that are each a convex combination of all previous subproblem solutions, for which one can derive an $O(\frac{1}{k})$ rate of convergence for appropriately chosen step-sizes.  Due to the structure of the subproblem solutions and the fact that iterates are convex combinations of subproblem solutions, the feasible solutions returned by the Frank-Wolfe method are also typically very highly-structured.  For example, when the feasible region is the unit simplex $\Delta_n := \{\lambda \in \mathbb{R}^n : e^T\lambda = 1, \lambda \geq 0\}$ and the linear optimization oracle always returns an extreme point, then the Frank-Wolfe method has the following sparsity property: the solution that the method produces at iteration $k$ has at most $k$ non-zero entries.  (This observation generalizes to the matrix optimization setting:  if the feasible region is a ball induced by the nuclear norm, then at iteration $k$ the rank of the matrix produced by the method is at most $k$.)  In many applications, such structural properties are highly desirable, and in such cases the Frank-Wolfe method may be more attractive than the faster accelerated methods, even though the Frank-Wolfe method has a slower rate of convergence.\medskip

The first set of contributions in this paper concern computational guarantees for arbitrary step-size sequences.  In Section \ref{sectfw}, we present a new complexity analysis of the Frank-Wolfe method wherein we derive an exact functional dependence of the complexity bound at iteration $k$ as a function of the step-size sequence $\{\bar{\alpha}_k\}$.  We derive bounds on the deviation from the optimal objective function value (and on the duality gap in the presence of minmax structure), and on the so-called FW gaps, which may be interpreted as specially structured duality gaps.  In Section \ref{sect-stepsize}, we use the technical theorems developed in Section \ref{sectfw} to derive computational guarantees for a variety of simple step-size rules including the well-studied step-size rule $\bar{\alpha}_k := \frac{2}{k+2}$, simple averaging, and constant step-sizes.  Our analysis retains the well-known optimal $O(\frac{1}{k})$ rate (optimal for linear optimization oracle-based methods \cite{lancomplex}, following also from \cite{jaggi2013revisiting}) when the step-size is either given by the rule $\bar{\alpha}_k := \frac{2}{k+2}$ or is determined by a line-search.  We also derive an $O\left(\frac{\ln(k)}{k}\right)$ rate for both the case when the step-size is given by simple averaging and in the case when the step-size is simply a suitably chosen constant.\medskip

The second set of contributions in this paper concern ``warm-start'' step-size rules and associated computational guarantees that reflect the the quality of the given initial iterate.  The $O(\frac{1}{k})$ computational guarantees associated with the step-size sequence $\bar{\alpha}_k := \frac{2}{k+2}$ are independent of quality of the initial iterate.  This is good if the objective function value of the initial iterate is very far from the optimal value, as  the computational guarantee is independent of the poor quality of the initial iterate.  But if the objective function value of the initial iterate is moderately close to the optimal value, one would want the Frank-Wolfe method, with an appropriate step-size sequence, to have computational guarantees that reflect the closeness to optimality of the initial objective function value.  In Section \ref{sect-warmstart}, we introduce a modification of the $\bar{\alpha}_k := \frac{2}{k+2}$ step-size rule that incorporates the quality of the initial iterate.  Our new step-size rule maintains the $O(\frac{1}{k})$ complexity bound but now the bound is enhanced by the quality of the initial iterate.  We also introduce a dynamic version of this warm start step-size rule, which dynamically incorporates all new bound information at each iteration.  For the dynamic step-size rule, we also derive a $O(\frac{1}{k})$ complexity bound that depends naturally on all of the bound information obtained throughout the course of the algorithm.\medskip

The third set of contributions concern computational guarantees in the presence of approximate computation of gradients and linear optimization subproblem solutions.  In Section \ref{sect-approx}, we first consider a variation of the Frank-Wolfe method where the linear optimization subproblem at iteration $k$ is solved approximately to an (additive) absolute accuracy of $\delta_k$.  We show that, independent of the choice of step-size sequence $\{\bar{\alpha}_k\}$, the Frank-Wolfe method does not suffer from an accumulation of errors in the presence of approximate subproblem solutions.  We extend the ``technical'' complexity theorems of Section \ref{sectfw}, which imply, for instance, that when an optimal step-size such as $\bar{\alpha}_k := \frac{2}{k+2}$ is used and the $\{\delta_k\}$ accuracy sequence is a constant $\delta$, then a solution with accuracy $O(\frac{1}{k} + \delta)$ can be achieved in $k$ iterations. We next examine variations of the Frank-Wolfe method where exact gradient computations are replaced with inexact gradient computations, under two different models of inexact gradient computations.  We show that all of the complexity results under the previously examined approximate subproblem solution case (including, for instance, the non-accumulation of errors) directly apply to the case where exact gradient computations are replaced with the $\delta$-oracle approximate gradient model introduced by d'Aspremont \cite{daspremontapprox}.  We also examine replacing exact gradient computations with the $(\delta, L)$-oracle model introduced by Devolder et al. \cite{devolderApprox}.  In this case the Frank-Wolfe method suffers from an accumulation of errors under essentially any step-size sequence $\{\bar{\alpha}_k\}$.  These results provide some insight into the inherent tradeoffs faced in choosing among several first-order methods.\medskip

\subsection{Notation}
Let $E$ be a finite-dimensional real vector space with dual vector space $E^\ast$. For a given $s \in E^\ast$ and a given $\lambda \in E$, let $s^T\lambda$ denote the evaluation of the linear functional $s$ at $\lambda$.  For a norm $\|\cdot\|$ on $E$, let $B(c,r) = \{\lambda \in E : \|\lambda - c\| \leq r\}$.  The dual norm $\|\cdot\|_\ast$ on the space $E^\ast$ is defined by $\|s\|_\ast := \max\limits_{\lambda \in B(0,1)} \{s^T\lambda\} $ for a given $s \in E^\ast$.  The notation ``$\tilde v \leftarrow \arg\max\limits_{v  \in S} \{f(v)\}$'' denotes assigning $\tilde v$ to be any optimal solution of the problem $\max\limits_{v  \in S} \{f(v)\}$.

\section{The Frank-Wolfe Method}\label{sectfw}

We recall the Frank-Wolfe method for convex optimization, see Frank and Wolfe \cite{frank-wolfe}, also Demyanov and Rubinov \cite{DR1970}, Levitin and Polyak \cite{levitin-polyak}, and Polyak \cite{polyak}, stated here for maximization problems:
\begin{equation}\label{poi3}\begin{array}{rl}\max\limits_{\lambda} & h(\lambda) \\
\mathrm{s.t.} & \lambda \in Q \ , \end{array}\end{equation} where $Q \subset E$ is convex and compact, and $h(\cdot) : Q \to \mathbb{R}$ is concave and differentiable on $Q$.  Let $h^*$ denote the optimal objective function value of (\ref{poi3}).  The basic Frank-Wolfe method is presented in Method \ref{fw-basic}, where the main computational requirement at each iteration is to solve a linear optimization problem over $Q$ in step (2.) of the method.  The step-size $\bar \alpha_k$ in step (4.) could be chosen by inexact or exact line-search, or by a pre-determined or dynamically determined step-size sequence $\{\bar \alpha_k\}$.  Also note that the version of the Frank-Wolfe method in Method \ref{fw-basic} does not allow a (full) step-size $\bar \alpha_k = 1$, the reasons for which will become apparent below.\medskip

\begin{algorithm}
\caption{Frank-Wolfe Method for maximizing $h(\lambda)$}\label{fw-basic}
\begin{algorithmic}
\STATE Initialize at $\lambda_1 \in Q$, (optional) initial upper bound $B_0$, $k \gets 1$ .

At iteration $k$:
\STATE 1. Compute $\nabla h (\lambda_k)$ .
\STATE 2. Compute $\tilde \lambda_k \gets \arg\max\limits_{\lambda \in Q}\{h(\lambda_k) + \nabla h (\lambda_k)^T(\lambda - \lambda_k)\}$ .\\
\ \ \ \ \ \ \ \ \ \ $B^w_k \leftarrow h(\lambda_k) + \nabla h (\lambda_k)^T(\tilde \lambda_k - \lambda_k)$ .\\
\ \ \ \ \ \ \ \ \ \ $G_k \leftarrow \nabla h (\lambda_k)^T(\tilde \lambda_k - \lambda_k)$ .\\
\STATE 3. (Optional: compute other upper bound $B^o_k$), update best bound $B_k \leftarrow \min\{B_{k-1}, B^w_k, B^o_k\}$ .
\STATE 4. Set $\lambda_{k+1} \gets \lambda_k + \bar{\alpha}_k(\tilde \lambda_k - \lambda_k)$, where $\bar{\alpha}_k \in [0,1)$ .
\end{algorithmic}
\end{algorithm}

As a consequence of solving the linear optimization problem in step (2.) of the method, one conveniently obtains the following upper bound on the optimal value $h^*$ of (\ref{poi3}): \begin{equation}\label{wolfebound} B^w_k := h(\lambda_k) + \nabla h (\lambda_k)^T(\tilde \lambda_k - \lambda_k) \ , \end{equation}and it follows from the fact that the linearization of $h(\cdot)$ at $\lambda_k$ dominates $h(\cdot)$ that $B^w_k$ is a valid upper bound on $h^*$.  We also study the quantity $G_k$ :
\begin{equation}\label{wolfegap} G_k := B^w_k - h(\lambda_k) = \nabla h (\lambda_k)^T(\tilde \lambda_k - \lambda_k) \ , \end{equation}
which we refer to as the ``FW gap'' at iteration $k$ for convenience.  Note that $G_k \ge h^\ast - h(\lambda_k) \ge 0$.  The use of the upper bound $B^w_k$ dates to the original 1956 paper of Frank and Wolfe \cite{frank-wolfe}.  As early as 1970, Demyanov and Rubinov \cite{DR1970} used the FW gap quantities extensively in their convergence proofs of the Frank-Wolfe method, and perhaps this quantity was used even earlier.  In certain contexts, $G_k$ is an important quantity by itself, see for example Hearn \cite{Hearn}, Khachiyan \cite{k} and Giesen et al. \cite{jaggi}. Indeed, Hearn \cite{Hearn} studies basic properties of the FW gaps independent of their use in any algorithmic schemes.  For results concerning upper bound guarantees on $G_k$ for specific and general problems see Khachiyan \cite{k}, Clarkson \cite{clarkson}, Hazan \cite{hazan}, Jaggi \cite{jaggi2013revisiting}, Giesen et al. \cite{jaggi}, and Harchaoui et al. \cite{harchaoui}. Both $B^w_k$ and $G_k$ are computed directly from the solution of the linear optimization problem in step (2.) and are recorded therein for convenience.\medskip

In some of our analysis of the Frank-Wolfe method, the computational guarantees will depend on the quality of upper bounds on $h^*$.  In addition to the Wolfe  bound $B^w_k$, step (3.) allows for an ``optional other upper bound $B^o_k$ '' that also might be computed at iteration $k$.  Sometimes there is structural knowledge of an upper bound as a consequence of a dual problem associated with (\ref{poi3}), as when $h(\cdot)$ is conveyed with minmax structure, namely:
\begin{equation}\label{minmax_structure}
h(\lambda) = \min_{x \in P} \ \phi(x,\lambda) \ ,
\end{equation}
where $P$ is a closed convex set and $\phi(\cdot,\cdot) : P \times Q \to \mathbb{R}$ is a continuous function that is convex in the first variable $x$ and concave in the second variable $\lambda$.  In this case define the convex function $f(\cdot): P \rightarrow \mathbb{R}$ given by $f(x) := \max\limits_{\lambda \in Q} \ \phi(x,\lambda)$ and consider the following duality paired problems:

\begin{equation}
\label{poi2} \mbox{(Primal):} \ \ \min\limits_{x \in P}f(x) \ \ \ \  \ \mbox{and} \ \ \ \ \ \mbox{(Dual):} \ \  \max\limits_{\lambda \in Q}h(\lambda) \ ,
\end{equation}
where the dual problem corresponds to our problem of interest (\ref{poi3}).  Weak duality holds, namely $h(\lambda) \leq h^* \le  f(x)$ for all $x \in P, \lambda \in Q$.  At any iterate $\lambda_k \in Q$ of the Frank-Wolfe method one can construct a ``minmax'' upper bound on $h^*$ by considering the variable $x$ in that structure:
\begin{equation}\label{minmaxbound} B^m_k := f(x_k):=\max\limits_{\lambda \in Q} \{\phi(x_k,\lambda)\} \ \ \ \mbox{where} \ \ \ x_k \in \arg\min\limits_{x \in P}  \{\phi(x,\lambda_k) \} \ , \end{equation}
and it follows from weak duality that $B^o_k := B^m_k$ is a valid upper bound for all $k$.  Notice that $x_k$ defined above is the ``optimal response'' to $\lambda_k$ in a minmax sense and hence is a natural choice of duality-paired variable associated with the variable $\lambda_k$.  Under certain regularity conditions, for instance when $h(\cdot)$ is globally differentiable on $E$, one can show that $B^m_k$ is at least as tight a bound as Wolfe's bound, namely $B^m_k \le B^w_k$ for all $k$ (see Proposition \ref{twobounds}), and therefore the FW gap $G_k$ conveniently bounds this minmax duality gap:  $B^m_k - h(\lambda_k) \le B^w_k - h(\lambda_k) = G_k$.\medskip

(Indeed, in the minmax setting notice that the optimal response $x_k$ in (\ref{minmaxbound}) is a function of the current iterate $\lambda_k$ and hence $f(x_k) - h(\lambda_k)=B^m_k - h(\lambda_k)$ is not just {\em any} duality gap but rather is determined completely by the current iterate $\lambda_k$.  This special feature of the duality gap $B^m_k - h(\lambda_k)$ is exploited in the application of the Frank-Wolfe method to rounding of polytopes \cite{k}, parametric optimization on the spectrahedron \cite{jaggi}, and to regularized regression \cite{gmf2013} (and perhaps elsewhere as well), where bounds on the FW gap $G_k$ are used to bound $B^m_k - h(\lambda_k)$ directly.)\medskip

We also mention that in some applications there might be exact knowledge of the optimal value $h^*$, such as in certain linear regression and/or machine learning applications where one knows {\em a priori} that the optimal value of the loss function is zero.  In these situations one can set $B^o_k \leftarrow h^*$.\medskip

Towards stating and proving complexity bounds for the Frank-Wolfe method, we use the following curvature constant $C_{h, Q}$, which is defined to be the minimal value of $C$ satisfying:
\begin{equation}\label{clarkson} h(\lambda + \alpha (\bar \lambda - \lambda)) \ge h(\lambda) + \nabla h(\lambda)^T(\alpha (\bar \lambda - \lambda)) - \tfrac{1}{2}C \alpha^2 \ \ \ \ \mbox{for~all~} \lambda, \bar \lambda \in Q \mbox{~~and~all~}\alpha \in [0,1] \ .
\end{equation}  (This notion of curvature was introduced by Clarkson \cite{clarkson} and extended in Jaggi \cite{jaggi2013revisiting}.)  For any choice of norm $\|\cdot\|$ on $E$, let $\mathrm{Diam}_Q$ denote the diameter of $Q$ measured with the norm $\|\cdot\|$, namely $\mathrm{Diam}_Q:=\max\limits_{\lambda, \bar \lambda \in Q} \{\|\lambda - \bar \lambda\|\}$ and let $L_{h,Q}$ be the Lipschitz constant for $\nabla h(\cdot)$ on $Q$, namely $L_{h,Q}$ is the smallest constant $L$ for which it holds that:
$$\| \nabla h(\lambda) -\nabla h(\bar \lambda)\|_* \le L \|\lambda - \bar \lambda\| \ \ \ \ \mbox{for~all~} \lambda, \bar \lambda \in Q \ . $$
It is straightforward to show that $C_{h, Q}$ is bounded above by the more classical metrics $\mathrm{Diam}_Q$ and $L_{h,Q}$, namely \begin{equation}\label{clarkbound}C_{h, Q} \le L_{h,Q}(\mathrm{Diam}_Q)^2 \ , \end{equation} see \cite{jaggi2013revisiting}; we present a short proof of this inequality in Proposition \ref{barL} for completeness.  In contrast to other (proximal) first-order methods, the Frank-Wolfe method does not depend on a choice of norm.  The norm invariant definition of $C_{h,Q}$ and the fact that \eqref{clarkbound} holds for \emph{any} norm are therefore particularly appealing properties of $C_{h,Q}$ as a behavioral measure for the Frank-Wolfe method.\medskip

As a prelude to stating our main technical results, we define the following two auxiliary sequences, where $\alpha_k$ and $\beta_k$ are functions of the first $k$ step-size sequence values, $\bar{\alpha}_1, \ldots, \bar{\alpha}_k$, from the Frank-Wolfe method:
\begin{equation}\label{dos}
\beta_k = \frac{1}{\prod\limits_{j=1}^{k-1} (1-\bar \alpha_j)} \ , \ \ \ \ \ \ \alpha_k = \frac{\beta_k \bar \alpha_k}{1- \bar \alpha_k} \ , \ \ \ \ \ k \ge 1 \ .
\end{equation}
(Here and in what follows we use the conventions: $\prod_{j = 1}^0\cdot = 1$ and $\sum_{i = 1}^0\cdot = 0$ .)\medskip

The following two theorems are our main technical constructs that will be used to develop the results herein.  The first theorem concerns optimality gap bounds.

\begin{theorem}\label{fw-complexity}
Consider the iterate sequences of the Frank-Wolfe method (Method \ref{fw-basic}) $\{\lambda_k\}$ and $\{\tilde \lambda_k\}$ and the sequence of upper bounds $\{B_k\}$ on $h^*$, using the step-size sequence $\{\bar\alpha_k\}$.  For the auxiliary sequences $\{\alpha_k\}$ and $\{\beta_k\}$ given by (\ref{dos}), and for any $k \geq 0$, the following inequality holds:
\begin{equation}\label{fw-ineq1}
B_k - h(\lambda_{k+1}) \leq \frac{B_k - h(\lambda_1)}{\beta_{k+1}} + \frac{\frac{1}{2}C_{h,Q}\sum_{i = 1}^k\frac{\alpha_i^2}{\beta_{i+1}}}{\beta_{k+1}} \ .
\end{equation}\qed
\end{theorem}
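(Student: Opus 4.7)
The core computational input is the curvature inequality \eqref{clarkson} applied to the update $\lambda_{k+1} = \lambda_k + \bar\alpha_k(\tilde\lambda_k - \lambda_k)$, which yields
\[
h(\lambda_{k+1}) \ \ge \ h(\lambda_k) + \bar\alpha_k \nabla h(\lambda_k)^T(\tilde\lambda_k - \lambda_k) - \tfrac{1}{2} C_{h,Q}\,\bar\alpha_k^2 \ = \ h(\lambda_k) + \bar\alpha_k G_k - \tfrac{1}{2} C_{h,Q}\,\bar\alpha_k^2.
\]
Since $B_k \le B^w_k = h(\lambda_k) + G_k$, we can replace $G_k$ by the (smaller) quantity $B_k - h(\lambda_k)$ on the right, and after rearrangement obtain
\[
B_k - h(\lambda_{k+1}) \ \le \ (1-\bar\alpha_k)\bigl(B_k - h(\lambda_k)\bigr) + \tfrac{1}{2}C_{h,Q}\,\bar\alpha_k^2.
\]

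The key observation is that this one-step inequality extends \emph{backward} through the iterates with $B_k$ (not $B_j$) on both sides. For any $j \le k$ we have $B_k \le B_j \le B^w_j$, so the same derivation applied at iteration $j$ with $B_k$ in place of $B_j$ gives
\[
B_k - h(\lambda_{j+1}) \ \le \ (1-\bar\alpha_j)\bigl(B_k - h(\lambda_j)\bigr) + \tfrac{1}{2}C_{h,Q}\,\bar\alpha_j^2, \qquad 1 \le j \le k.
\]

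Setting $A_j := B_k - h(\lambda_j)$, this is a simple linear recurrence $A_{j+1} \le (1-\bar\alpha_j)A_j + \tfrac{1}{2}C_{h,Q}\,\bar\alpha_j^2$ which I would unroll from $j=k$ down to $j=1$ to get
\[
A_{k+1} \ \le \ \Biggl(\prod_{j=1}^k (1-\bar\alpha_j)\Biggr) A_1 + \tfrac{1}{2}C_{h,Q}\sum_{i=1}^k \bar\alpha_i^2 \prod_{j=i+1}^k (1-\bar\alpha_j).
\]
From \eqref{dos} one reads off $\prod_{j=1}^k(1-\bar\alpha_j) = 1/\beta_{k+1}$ and $\prod_{j=i+1}^k(1-\bar\alpha_j) = \beta_{i+1}/\beta_{k+1}$, so the bound becomes $\bigl(A_1 + \tfrac{1}{2}C_{h,Q}\sum_{i=1}^k \bar\alpha_i^2 \beta_{i+1}\bigr)/\beta_{k+1}$.

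The last step is the routine algebraic check that $\bar\alpha_i^2 \beta_{i+1} = \alpha_i^2/\beta_{i+1}$, which follows directly from $\alpha_i = \beta_i \bar\alpha_i/(1-\bar\alpha_i)$ together with $\beta_i = (1-\bar\alpha_i)\beta_{i+1}$; substituting converts the sum into the form $\sum_{i=1}^k \alpha_i^2/\beta_{i+1}$ stated in \eqref{fw-ineq1}. The only conceptually delicate point is the monotonicity $B_k \le B_j$ used to propagate the \emph{final} bound $B_k$ through all earlier steps of the recurrence; once that is in place, the rest is telescoping and bookkeeping with the definitions of $\alpha_i, \beta_i$.
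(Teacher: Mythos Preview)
Your proof is correct and uses essentially the same ingredients as the paper's: the per-step curvature inequality, the monotonicity $B_k \le B^w_j$ for $j \le k$, and the algebraic identities $\prod_{j=1}^{k}(1-\bar\alpha_j)=1/\beta_{k+1}$ and $\bar\alpha_i^2\beta_{i+1}=\alpha_i^2/\beta_{i+1}$. The only cosmetic difference is organizational: the paper multiplies the curvature inequality by $\beta_{i+1}$ and telescopes $\beta_{i+1}h(\lambda_{i+1})-\beta_i h(\lambda_i)$ (keeping the $B^w_i$ terms until the end), whereas you substitute $B_k$ immediately and unroll the resulting linear recurrence---these are two standard presentations of the same telescoping argument.
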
\medskip

\noindent (The summation expression in the rightmost term above appears also in the bound given for the dual averaging method of Nesterov \cite{nesterovda}.  Indeed, this is no coincidence as the sequences $\{\alpha_k\}$ and $\{\beta_k\}$ given by (\ref{dos}) arise precisely from a connection between the Frank-Wolfe method and the dual averaging method. If we define $s_k := \lambda_0 + \sum_{i = 0}^{k-1}\alpha_i\tilde{\lambda}_i$, then one can interpret the sequence $\{s_k\}$ as the sequence of dual variables in a particular instance of the dual averaging method. This connection underlies the proof of Theorem \ref{fw-complexity}, and the careful reader will notice the similarities between the proof of Theorem \ref{fw-complexity} and the proof of Theorem 1 in \cite{nesterovda}. For this reason we will henceforth refer to the sequences (\ref{dos}) as the ``dual averages'' sequences associated with $\{\bar \alpha_k\}$.)\medskip

The second theorem concerns the FW gap values $G_k$ from step (2.) in particular.

\begin{theorem}\label{gap-complexity}
Consider the iterate sequences of the Frank-Wolfe method (Method \ref{fw-basic}) $\{\lambda_k\}$ and $\{\tilde \lambda_k\}$, the sequence of upper bounds $\{B_k\}$ on $h^*$, and the sequence of FW gaps $\{G_k\}$ from step (2.), using the step-size sequence $\{\bar\alpha_k\}$.  For the auxiliary sequences $\{\alpha_k\}$ and $\{\beta_k\}$ given by (\ref{dos}), and for any $\ell \geq 0$ and $k \geq \ell + 1$, the following inequality holds:
\begin{equation}\label{gap-ineq1}
\min_{i \in \{\ell+1, \ldots, k\}}G_i \leq \frac{1}{\sum_{i = \ell+1}^k\bar{\alpha}_i}\left[\frac{B_\ell - h(\lambda_1)}{\beta_{\ell+1}} + \frac{\frac{1}{2}C_{h,Q}\sum_{i = 1}^\ell\frac{\alpha_i^2}{\beta_{i+1}}}{\beta_{\ell+1}}\right] + \frac{\frac{1}{2}C_{h,Q}\sum_{i = \ell+1}^k\bar{\alpha}_i^2}{\sum_{i=\ell+1}^k\bar{\alpha}_i} \ .
\end{equation}\qed
\end{theorem}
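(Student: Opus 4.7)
The natural strategy is to combine the curvature inequality with Theorem \ref{fw-complexity} via a telescoping argument on the sequence of objective values. Specifically, applying the curvature definition (\ref{clarkson}) with $\lambda = \lambda_i$, $\bar\lambda = \tilde\lambda_i$, and $\alpha = \bar\alpha_i$, so that $\lambda + \alpha(\bar\lambda - \lambda) = \lambda_{i+1}$, yields
\[
h(\lambda_{i+1}) \;\ge\; h(\lambda_i) + \bar\alpha_i \nabla h(\lambda_i)^T(\tilde\lambda_i - \lambda_i) - \tfrac{1}{2}C_{h,Q}\bar\alpha_i^2 \;=\; h(\lambda_i) + \bar\alpha_i G_i - \tfrac{1}{2}C_{h,Q}\bar\alpha_i^2 \ ,
\]
which rearranges to $\bar\alpha_i G_i \le h(\lambda_{i+1}) - h(\lambda_i) + \tfrac{1}{2}C_{h,Q}\bar\alpha_i^2$. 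This is the crucial step: it trades a gap at iteration $i$ for an actual objective increase plus a manageable error term.

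Next I would sum this inequality over $i = \ell+1, \ldots, k$. The $h$-values telescope, leaving $h(\lambda_{k+1}) - h(\lambda_{\ell+1})$ on the right-hand side. Since $B_\ell$ is a valid upper bound on $h^*$ and $h(\lambda_{k+1}) \le h^*$, I can replace $h(\lambda_{k+1})$ by $B_\ell$ to obtain
\[
\sum_{i = \ell+1}^k \bar\alpha_i G_i \;\le\; B_\ell - h(\lambda_{\ell+1}) + \tfrac{1}{2}C_{h,Q}\sum_{i=\ell+1}^k \bar\alpha_i^2 \ .
\]
Because $G_i \ge 0$ and the $\bar\alpha_i$ are nonnegative weights, the left-hand side dominates $\bigl(\sum_{i=\ell+1}^k \bar\alpha_i\bigr)\cdot\min_{i\in\{\ell+1,\ldots,k\}} G_i$, so dividing through gives an intermediate bound in terms of $B_\ell - h(\lambda_{\ell+1})$.

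Finally, to convert the $B_\ell - h(\lambda_{\ell+1})$ term into the first bracketed expression of (\ref{gap-ineq1}), I invoke Theorem \ref{fw-complexity} at index $\ell$, which states exactly
\[
B_\ell - h(\lambda_{\ell+1}) \;\le\; \frac{B_\ell - h(\lambda_1)}{\beta_{\ell+1}} + \frac{\tfrac{1}{2}C_{h,Q}\sum_{i=1}^\ell \alpha_i^2/\beta_{i+1}}{\beta_{\ell+1}} \ .
\]
Plugging this in and keeping the additional $\tfrac{1}{2}C_{h,Q}\sum_{i=\ell+1}^k \bar\alpha_i^2$ term (divided by $\sum_{i=\ell+1}^k \bar\alpha_i$) yields (\ref{gap-ineq1}) exactly. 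There is no substantive obstacle here: the only thing to watch is that Theorem \ref{fw-complexity} is applied with the \emph{same} bound $B_\ell$ that was used after telescoping (which is legitimate because $\{B_k\}$ is monotonically non-increasing, so any $B$ chosen at or before iteration $\ell$ upper-bounds $h^*$ and hence $h(\lambda_{k+1})$); the edge case $\ell = 0$ works by the convention $\sum_{i=1}^0 = 0$ and $\beta_1 = 1$, reducing the bracket to $B_0 - h(\lambda_1)$.
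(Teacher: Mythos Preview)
Your proof is correct and follows essentially the same route as the paper: the curvature inequality gives $\bar\alpha_i G_i \le h(\lambda_{i+1}) - h(\lambda_i) + \tfrac{1}{2}C_{h,Q}\bar\alpha_i^2$, telescoping over $i=\ell+1,\ldots,k$ and bounding $h(\lambda_{k+1}) - h(\lambda_{\ell+1}) \le B_\ell - h(\lambda_{\ell+1})$ via $B_\ell \ge h^* \ge h(\lambda_{k+1})$, then invoking Theorem~\ref{fw-complexity} at index $\ell$ and dividing by $\sum_{i=\ell+1}^k \bar\alpha_i$. The only cosmetic difference is the order in which the paper applies Theorem~\ref{fw-complexity} and the bound $h(\lambda_{k+1}) \le B_\ell$; also, your remark that $G_i \ge 0$ is not actually needed for the minimum inequality (nonnegativity of the weights $\bar\alpha_i$ alone suffices).
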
\medskip

Theorems \ref{fw-complexity} and \ref{gap-complexity} can be applied to yield specific complexity results for {\em any} specific step-size sequence $\{\bar{\alpha}_k\}$ (satisfying the mild assumption that $\bar{\alpha}_k < 1$) through the use of the implied $\{\alpha_k\}$ and $\{\beta_k\}$ dual averages sequences.  This is shown for several useful step-size sequences in the next section.\medskip

\noindent {\bf Proof of Theorem \ref{fw-complexity}:}  We will show the slightly more general result for $k \ge 0$:
\begin{equation}\label{fw-uno}
\min\{B,B_k\} - h(\lambda_{k+1}) \leq \frac{B - h(\lambda_1)}{\beta_{k+1}} + \frac{\frac{1}{2}C_{h,Q}\sum_{i = 1}^k\frac{\alpha_i^2}{\beta_{i+1}}}{\beta_{k+1}} \  \ \ \ \ \ \mathrm{for~any~} B \ ,
\end{equation}from which \eqref{fw-ineq1} follows by substituting $B=B_k$ above.\medskip

For $k=0$ the result follows trivially since $\beta_1=1$ and the summation term on the right side of \eqref{fw-uno} is zero by the conventions for null products and summations stated earlier.  For $k \ge 1$, we begin by observing that the following equalities hold for the dual averages sequences \eqref{dos}:

\begin{equation}\label{identity1} \beta_{i+1} - \beta_i  = \bar \alpha_i \beta_{i+1} = \alpha_i \ \ \ \mathrm{and} \ \ \
\beta_{i+1} \bar\alpha_i^2  = \frac{\alpha_i^2}{\beta_{i+1}} \ \ \ \mathrm{for~} i \ge 1  \ , \end{equation}and
\begin{equation}\label{identity2}1+\sum_{i=1}^k \alpha_i  = \beta_{k+1} \ \ \ \mathrm{for~} k \ge 1  \ . \end{equation}

\noindent We then have for $i \ge 1$:
\begin{align*}
\beta_{i+1} h(\lambda_{i+1}) & \ge \beta_{i+1} \left[h(\lambda_i) + \nabla h(\lambda_i)^T(\tilde\lambda_i-\lambda_i)\bar\alpha_i - \frac{1}{2}\bar\alpha_i^2C_{h,Q} \right] \\
& = \beta_{i} h(\lambda_i) + (\beta_{i+1} - \beta_{i})h(\lambda_i) +  \beta_{i+1}\bar\alpha_i\nabla h(\lambda_i)^T(\tilde\lambda_i-\lambda_i) - \frac{1}{2}\beta_{i+1}\bar\alpha_i^2C_{h,Q} \\
& = \beta_{i} h(\lambda_i) + \alpha_{i}h(\lambda_i) +  \alpha_i\nabla h(\lambda_i)^T(\tilde\lambda_i-\lambda_i) - \frac{1}{2}\frac{\alpha_i^2}{\beta_{i+1}}C_{h,Q} \\
& = \beta_{i} h(\lambda_i) + \alpha_{i}\left[h(\lambda_i) +  \nabla h(\lambda_i)^T(\tilde\lambda_i-\lambda_i)\right] - \frac{1}{2}\frac{\alpha_i^2}{\beta_{i+1}}C_{h,Q} \\
& = \beta_{i} h(\lambda_i) + \alpha_{i}B^w_i - \frac{1}{2}\frac{\alpha_i^2}{\beta_{i+1}}C_{h,Q} \ . \\
\end{align*}  The inequality in the first line above follows from the definition of $C_{h,Q}$ in \eqref{clarkson} and $\lambda_{i+1} -\lambda_i = \bar\alpha_i(\tilde\lambda_i - \lambda_i)$.  The second equality above uses the identities \eqref{identity1}, and the fourth equality uses the definition of the Wolfe upper bound \eqref{wolfebound}.  Rearranging and summing the above over $i$, it follows that for any scalar $B$:
\begin{align}\label{oops}
B+\sum_{i=1}^k \alpha_i B^w_i \le B + \beta_{k+1} h(\lambda_{k+1}) - \beta_1 h(\lambda_1) + \frac{1}{2}\sum_{i=1}^k \frac{\alpha_i^2}{\beta_{i+1}}C_{h,Q} \ .
\end{align}Therefore
\begin{align*}
\min\{B,B_k\}\beta_{k+1} & = \min\{B,B_k\}\left(1+\sum_{i=1}^k \alpha_i \right) \\
& \le  B + \sum_{i=1}^k \alpha_i B^w_i \\
& \le B + \beta_{k+1} h(\lambda_{k+1}) - h(\lambda_1) + \frac{1}{2}\sum_{i=1}^k \frac{\alpha_i^2}{\beta_{i+1}}C_{h,Q} \ ,
\end{align*} where the first equality above uses identity \eqref{identity2}, the first inequality uses the fact that $B_k \le B^w_i$ for $i \le k$, and the second inequality uses \eqref{oops} and the fact that $\beta_1=1$.  The result then follows by dividing by $\beta_{k+1}$ and rearranging terms. \qed\medskip

\noindent {\bf Proof of Theorem \ref{gap-complexity}:}  For $i\ge 1$ we have:
\begin{equation}\begin{array}{rl}\label{duo} h(\lambda_{i+1}) & \ge h(\lambda_i) + \nabla h(\lambda_i)^T(\tilde\lambda_i-\lambda_i)\bar\alpha_i - \frac{1}{2}\bar\alpha_i^2C_{h,Q} \\ \\
& = h(\lambda_i) + \bar\alpha_{i}G_i - \frac{1}{2}\bar\alpha_i^2C_{h,Q} \ ,
\end{array}\end{equation}where the inequality follows from the definition of the curvature constant in \eqref{clarkson}, and the equality follows from the definition of the FW gap in \eqref{wolfegap}.  Summing the above over $i \in \{\ell + 1, \ldots, k\}$ and rearranging yields:
\begin{equation}\begin{array}{rl}\label{trio}
\sum_{i=\ell+1}^k \bar\alpha_{i}G_i & \le h(\lambda_{k+1}) - h(\lambda_{\ell+1}) +  \sum_{i=\ell+1}^k\frac{1}{2}\bar\alpha_i^2C_{h,Q}  \ .
\end{array}
\end{equation}
Combining \eqref{trio} with Theorem \ref{fw-complexity} we obtain:
$$\sum_{i=\ell+1}^k \bar\alpha_{i}G_i  \le h(\lambda_{k+1}) - B_\ell + \frac{B_\ell - h(\lambda_1)}{\beta_{\ell+1}} + \frac{\frac{1}{2}C_{h,Q}\sum_{i = 1}^\ell\frac{\alpha_i^2}{\beta_{i+1}}}{\beta_{\ell+1}} +  \sum_{i=\ell+1}^k\frac{1}{2}\bar\alpha_i^2C_{h,Q} \ , $$
and since $B_\ell \geq h^\ast \geq h(\lambda_{k+1})$ we obtain:
$$\left(\min_{i \in \{\ell+1, \ldots, k\}}G_i\right)\left(\sum_{i=\ell+1}^k \bar\alpha_{i}\right) \le \sum_{i=\ell+1}^k \bar\alpha_{i}G_i  \le \frac{B_\ell - h(\lambda_1)}{\beta_{\ell+1}} + \frac{\frac{1}{2}C_{h,Q}\sum_{i = 1}^\ell\frac{\alpha_i^2}{\beta_{i+1}}}{\beta_{\ell+1}} +  \sum_{i=\ell+1}^k\frac{1}{2}\bar\alpha_i^2C_{h,Q} \ , $$and dividing by $\sum_{i=\ell+1}^k \bar\alpha_{i}$ yields the result. \qed\medskip

\section{Computational Guarantees for Specific Step-size Sequences}\label{sect-stepsize}

Herein we use Theorems \ref{fw-complexity} and \ref{gap-complexity} to derive computational guarantees for a variety of specific step-size sequences.\medskip

It will be useful to consider a version of the Frank-Wolfe method wherein there is a single ``pre-start'' step.  In this case we are given some $\lambda_0 \in Q$ and some upper bound $B_{-1}$ on $h^*$ (one can use $B_{-1}=+\infty$ if no information is available) and we proceed like any other iteration except that in step (4.) we set $\lambda_{1} \gets \tilde \lambda_0$, which is equivalent to setting $\bar \alpha_0 := 1$. This is shown formally in the Pre-start Procedure \ref{prestart}.\medskip

\floatname{algorithm}{Procedure}
\begin{algorithm}
\caption{Pre-start Step of Frank-Wolfe Method given $\lambda_0 \in Q$ and (optional) upper bound $B_{-1}$}\label{prestart}
\begin{algorithmic}
\STATE 1. Compute $\nabla h (\lambda_0)$ .
\STATE 2. Compute $\tilde \lambda_0 \gets \arg\max\limits_{\lambda \in Q}\{h(\lambda_0) + \nabla h (\lambda_0)^T(\lambda - \lambda_0)\}$ .\\
\ \ \ \ \ \ \ \ \ \ $B^w_0 \leftarrow h(\lambda_0) + \nabla h (\lambda_0)^T(\tilde \lambda_0 - \lambda_0)$ .\\
\ \ \ \ \ \ \ \ \ \ $G_0 \leftarrow \nabla h (\lambda_0)^T(\tilde \lambda_0 - \lambda_0)$ .\\
\STATE 3. (Optional: compute other upper bound $B^o_0$), update best bound $B_0 \leftarrow \min\{B_{-1}, B^w_0, B^o_0\}$ .
\STATE 4. Set $\lambda_{1} \gets \tilde \lambda_0$ .
\end{algorithmic}
\end{algorithm}
\floatname{algorithm}{Method}

Before developing computational guarantees for specific step-sizes, we present a property of the pre-start step (Procedure \ref{prestart}) that has implications for such computational guarantees.\medskip

\begin{proposition}\label{prestart-property}  Let $\lambda_1$ and $B_0$ be computed by the pre-start step Procedure \ref{prestart}.  Then $B_0 - h(\lambda_1) \le \frac{1}{2}C_{h,Q}$.
\end{proposition}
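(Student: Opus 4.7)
The plan is to apply the curvature inequality \eqref{clarkson} at the pre-start step with the maximal step-size $\alpha = 1$, exploiting the fact that the pre-start step is precisely the case $\bar{\alpha}_0 = 1$ which sets $\lambda_1 = \tilde{\lambda}_0$. This is the one place where a full step is allowed, and it is exactly what makes the constant on the right-hand side equal to $\tfrac{1}{2}C_{h,Q}$.

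First I would apply \eqref{clarkson} with $\lambda = \lambda_0$, $\bar{\lambda} = \tilde{\lambda}_0$, and $\alpha = 1$, so that $\lambda + \alpha(\bar{\lambda} - \lambda) = \tilde{\lambda}_0 = \lambda_1$. This yields
\[
h(\lambda_1) \;\geq\; h(\lambda_0) + \nabla h(\lambda_0)^T(\tilde{\lambda}_0 - \lambda_0) - \tfrac{1}{2}C_{h,Q} \;=\; B^w_0 - \tfrac{1}{2}C_{h,Q},
\]
where the equality uses the definition of the Wolfe bound $B^w_0$ from step (2.) of Procedure \ref{prestart}. Rearranging gives $B^w_0 - h(\lambda_1) \leq \tfrac{1}{2}C_{h,Q}$.

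Then I would close the argument by observing that $B_0 = \min\{B_{-1}, B^w_0, B^o_0\} \leq B^w_0$ by construction in step (3.), so
\[
B_0 - h(\lambda_1) \;\leq\; B^w_0 - h(\lambda_1) \;\leq\; \tfrac{1}{2}C_{h,Q},
\]
which is the claim.

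There is essentially no obstacle: the entire point of the pre-start step is to convert an arbitrary initial iterate into one whose gap to the best available upper bound is controlled by $\tfrac{1}{2}C_{h,Q}$ in a single full Frank-Wolfe step. This is precisely why the main method in Method \ref{fw-basic} disallows $\bar{\alpha}_k = 1$ (since the curvature bound would otherwise be invoked at full step repeatedly and the auxiliary sequences in \eqref{dos} would be undefined), while the pre-start is singled out as the one place where a unit step is used to produce a clean warm-start bound.
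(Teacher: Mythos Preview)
Your proof is correct and follows exactly the same approach as the paper: apply the curvature inequality \eqref{clarkson} at $\lambda_0$ with $\bar{\lambda}=\tilde{\lambda}_0$ and $\alpha=1$ to get $h(\lambda_1)\ge B^w_0-\tfrac{1}{2}C_{h,Q}$, then use $B_0\le B^w_0$ and rearrange. The additional commentary you provide about why the pre-start step is singled out is accurate and helpful context, though not part of the paper's proof itself.
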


\begin{proof}We have $\lambda_1 = \tilde\lambda_0$ and $B_0 \le B^w_0$, whereby from the definition of $C_{h,Q}$ using $\alpha = 1$ we have:
$$h(\lambda_1) = h(\tilde\lambda_0) \ge h(\lambda_0) + \nabla h(\lambda_0)^T(\tilde\lambda_0 - \lambda_0) - \tfrac{1}{2}C_{h,Q} = B^w_0 - \tfrac{1}{2}C_{h,Q} \ge B_0- \tfrac{1}{2}C_{h,Q} \ , $$ and the result follows by rearranging terms.
\end{proof}

\subsection{A Well-studied Step-size Sequence}\label{vivek}
Suppose we initiate the Frank-Wolfe method with the pre-start step Procedure \ref{prestart} from a given value $\lambda_0 \in Q$ (which by definition assigns the step-size $\bar \alpha_0=1$ as discussed earlier), and then use the step-size $\bar \alpha_i = 2/(i+2)$ for $i \ge 1$.  This can be written equivalently as:
\begin{equation}\label{nemirovsteps} \bar \alpha_i = \frac{2}{i+2} \ \ \ \ \ \ \ \mbox{for~} i \ge 0 \ . \end{equation}Computational guarantees for this sequence appeared in Clarkson \cite{clarkson}, Hazan \cite{hazan} (with a corrected proof in Giesen et al. \cite{jaggi}), and Jaggi \cite{jaggi2013revisiting}.  In unpublished correspondence with the first author in 2007, Nemirovski \cite{nemprivate} presented a short inductive proof of convergence of the Frank-Wolfe method using this step-size rule.\medskip

We use the phrase ``bound gap'' to generically refer to the difference between an upper bound $B$ on $h^*$ and the value $h(\lambda)$, namely $B-h(\lambda)$.  The following result describes guarantees on the bound gap $B_k - h(\lambda_{k+1})$ and the FW gap $G_k$ using the step-size sequence \eqref{nemirovsteps}, that are applications of Theorems \ref{fw-complexity} and \ref{gap-complexity}, and that are very minor improvements of existing results as discussed below.

\begin{bound}\label{nemirov-complexity}
Under the step-size sequence (\ref{nemirovsteps}), the following inequalities hold for all $k \ge 1$:
\begin{equation}\label{nemirov-ineq1}
B_k - h(\lambda_{k+1}) \leq \frac{2C_{h,Q}}{k+4}
\end{equation}
and
\begin{equation}\label{nemirov-gap-ineq1}
\ \  \ \ \ \ \ \ \min_{i \in \{1, \ldots, k\}}G_i \leq \frac{4.5C_{h,Q}}{k} \ .
\end{equation}
\end{bound}\medskip

\noindent The bound \eqref{nemirov-ineq1} is a very minor improvement over that in Hazan \cite{hazan}, Giesen et al. \cite{jaggi}, Jaggi \cite{jaggi2013revisiting}, and Harchaoui et al. \cite{harchaoui}, as the denominator is additively larger by $1$ (after accounting for the pre-start step and the different indexing conventions).  The bound \eqref{nemirov-gap-ineq1} is a modification of the original bound in Jaggi \cite{jaggi2013revisiting}, and is also a slight improvement of the bound in Harchaoui et al. \cite{harchaoui} inasmuch as the denominator is additively larger by $1$ and the bound is valid for all $k \ge 1$.\medskip

\noindent {\bf Proof of Bound \ref{nemirov-complexity}:}  Using \eqref{nemirovsteps} it is easy to show that the dual averages sequences \eqref{dos} satisfy $\beta_k = \frac{k(k+1)}{2}$ and $\alpha_k = k+1$ for $k\ge 1$.  Utilizing Theorem \ref{fw-complexity}, we have for $k\ge 1$:
\begin{align*}
B_k - h(\lambda_{k+1}) & \leq \frac{B_k - h(\lambda_1)}{\beta_{k+1}} + \frac{\frac{1}{2}C_{h,Q}\sum_{i = 1}^k\frac{\alpha_i^2}{\beta_{i+1}}}{\beta_{k+1}} \\
& \leq \frac{B_0 - h(\lambda_1)}{\beta_{k+1}} + \frac{\frac{1}{2}C_{h,Q}\sum_{i = 1}^k\frac{\alpha_i^2}{\beta_{i+1}}}{\beta_{k+1}} \\
& \leq \frac{\frac{1}{2}C_{h,Q}}{\beta_{k+1}} + \frac{\frac{1}{2}C_{h,Q}\sum_{i = 1}^k\frac{\alpha_i^2}{\beta_{i+1}}}{\beta_{k+1}} \\
& = \frac{C_{h,Q}}{(k+1)(k+2)}\left[1+ \sum_{i = 1}^k\frac{2(i+1)^2}{(i+1)(i+2)}\right] \\
& = \frac{C_{h,Q}}{(k+1)(k+2)}\left[\sum_{i = 0}^k\frac{2(i+1)}{(i+2)}\right] \\
& \le \frac{2C_{h,Q}}{k+4} \ ,
\end{align*}where the second inequality uses $B_k \le B_0$, the third inequality uses Proposition \ref{prestart-property}, the first equality substitutes the dual averages sequence values, and the final inequality follows from Proposition \ref{boringbound1}.  This proves \eqref{nemirov-ineq1}.\medskip

To prove \eqref{nemirov-gap-ineq1} we proceed as follows.  First apply Theorem \ref{gap-complexity} with $\ell = 0$ and $k = 1$ to obtain:
\begin{equation*}
G_1 \leq \frac{1}{\bar{\alpha}_1}\left[B_0 - h(\lambda_1)\right] + \frac{1}{2}C_{h,Q}\bar{\alpha}_1 \leq \frac{1}{2}C_{h,Q}\left[\frac{1}{\bar{\alpha}_1} + \bar{\alpha}_1\right] = \frac{1}{2}C_{h,Q}\left[\frac{3}{2} + \frac{2}{3}\right] = \frac{13}{12}C_{h,Q} \ ,
\end{equation*}
where the second inequality uses Proposition \ref{prestart-property}. Since $\frac{13}{12} \leq 4.5$ and $\frac{13}{12} \leq \frac{4.5}{2}$, this proves \eqref{nemirov-gap-ineq1} for $k = 1, 2$. Assume now that $k \geq 3$. Let $\ell = \lceil\frac{k}{2}\rceil - 2$ so that $\ell \geq 0$. We have:
\begin{equation}\label{nem_lb}
\sum_{i = \ell+1}^k\bar{\alpha}_i = 2\sum_{i = \ell+1}^k\frac{1}{i + 2} = 2\sum_{i = \ell + 3}^{k+2}\frac{1}{i} \geq 2\ln\left(\frac{k+3}{\ell+3}\right) \geq 2\ln\left(\frac{k+3}{\frac{k}{2} + 1.5}\right) = 2\ln(2) \ ,
\end{equation}
where the first inequality uses Proposition \ref{boringbound3} and the second inequality uses $\lceil\frac{k}{2}\rceil \leq \frac{k}{2} + \frac{1}{2}$. We also have:
\begin{equation}\label{nem_ub}
\sum_{i = \ell+1}^k\bar{\alpha}_i^2 = 4\sum_{i = \ell+1}^k\frac{1}{(i+2)^2} = 4\sum_{i = \ell+3}^{k+2}\frac{1}{i^2} \leq \frac{4(k - \ell)}{(\ell + 2)(k+2)} \leq \frac{4\left(\frac{k}{2} + 2\right)}{\frac{k}{2}(k+2)} = \frac{4(k + 4)}{k(k+2)} \ ,
\end{equation}
where the first inequality uses Proposition \ref{boringbound3} and the second inequality uses $\lceil\frac{k}{2}\rceil \geq \frac{k}{2}$. Applying Theorem \ref{gap-complexity} and using \eqref{nem_lb} and \eqref{nem_ub} yields:
\begin{align*}
\min_{i \in \{1, \ldots, k\}}G_i &\leq \frac{1}{2\ln(2)}\left[\frac{B_\ell - h(\lambda_1)}{\beta_{\ell+1}} + \frac{\frac{1}{2}C_{h,Q}\sum_{i = 1}^\ell\frac{\alpha_i^2}{\beta_{i+1}}}{\beta_{\ell+1}} + \frac{2C_{h,Q}(k + 4)}{k(k+2)} \right]\\
&\leq \frac{1}{2\ln(2)}\left[\frac{2C_{h,Q}}{\ell + 4} + \frac{2C_{h,Q}(k + 4)}{k(k+2)} \right]\\
&\leq \frac{2C_{h,Q}}{2\ln(2)}\left[\frac{2}{k + 4} + \frac{k + 4}{k(k+2)} \right] = \frac{2C_{h,Q}}{2\ln(2)}\left[\frac{3k^2 + 12k + 16}{(k+4)(k+2)k} \right] \leq \frac{2C_{h,Q}}{2\ln(2)}\left(\frac{3}{k}\right) \leq \frac{4.5C_{h,Q}}{k} \ ,
\end{align*}
where the second inequality uses the chain of inequalities used to prove \eqref{nemirov-ineq1}, the third inequality uses $\ell + 4 \geq \frac{k}{2} + 2$, and the fourth inequality uses $k^2 + 4k + \frac{16}{3} \leq k^2 + 6k + 8 = (k+4)(k+2)$.\qed

\subsection{Simple Averaging}\label{pointer}
Consider the following step-size sequence:
\begin{equation}\label{avgsteps}
\bar \alpha_i = \frac{1}{i+1} \ \ \ \ \ \ \ \  \ \mbox{for~} i \ge 0 \ ,
\end{equation}
where, as with the step-size sequence \eqref{nemirovsteps}, we write $\bar{\alpha}_0 = 1$ to indicate the use of the pre-start step Procedure \ref{prestart}. It follows from a simple inductive argument that, under the step-size sequence \eqref{avgsteps}, $\lambda_{k+1}$ is the simple average of $\tilde{\lambda}_0, \tilde{\lambda}_1, \ldots, \tilde{\lambda}_k$, i.e., we have
\begin{equation*}
\lambda_{k+1} = \frac{1}{k+1}\sum_{i = 0}^k\tilde{\lambda}_i \ \ \ \  \ \mbox{for all~} k \geq 0 \ .
\end{equation*}

\begin{bound}\label{average-complexity}
Under the step-size sequence \eqref{avgsteps}, the following inequality holds for all $k \ge 0$:
\begin{equation}\label{average-ineq}
B_{k} - h(\lambda_{k+1}) \le \frac{\frac{1}{2}C_{h,Q}(1+\ln(k+1))}{k+1} \ ,
\end{equation}
and the following inequality holds for all $k \geq 2$:
\begin{equation}\label{average-gap-ineq}
\min_{i \in \{1, \ldots, k\}}G_i \leq \frac{\frac{3}{4}C_{h,Q}\left(2.3 + 2\ln(k)\right)}{k-1} \ .
\end{equation}
\end{bound}\medskip

\noindent {\bf Proof of Bound \ref{average-complexity}:}
Using \eqref{avgsteps} it is easy to show that the dual averages sequences \eqref{dos} are given by $\beta_k = k$ and $\alpha_k = 1$ for $k\ge 1$.  Utilizing Theorem \ref{fw-complexity} and Proposition \ref{prestart-property}, we have for $k\ge 1$:
\begin{align*}
B_k - h(\lambda_{k+1}) & \leq \frac{\frac{1}{2}C_{h,Q}}{\beta_{k+1}} + \frac{\frac{1}{2}C_{h,Q}\sum_{i = 1}^k\frac{\alpha_i^2}{\beta_{i+1}}}{\beta_{k+1}} \\
& = \frac{\frac{1}{2}C_{h,Q}}{k+1}\left[1+ \sum_{i = 1}^k\frac{1}{i+1}\right] \\
& \le \frac{\frac{1}{2}C_{h,Q}}{k+1}\left[1 + \ln(k+1)\right] \ ,
\end{align*}
where the first equality substitutes the dual averages sequence values and the second inequality uses Proposition \ref{boringbound3}. This proves \eqref{average-ineq}. To prove \eqref{average-gap-ineq}, we proceed as follows. Let $\ell = \lfloor\frac{k}{2}\rfloor - 1$, whereby $\ell \geq 0$ since $k \geq 2$. We have:
\begin{equation}\label{avg_lb}
\sum_{i = \ell+1}^k\bar{\alpha}_i = \sum_{i = \ell+1}^k\frac{1}{i + 1} = \sum_{i = \ell + 2}^{k+1}\frac{1}{i} \geq \ln\left(\frac{k+2}{\ell + 2}\right) \geq \ln\left(\frac{k+2}{\frac{k}{2} + 1}\right) = \ln(2) \ ,
\end{equation}
where the first inequality uses Proposition \ref{boringbound3} and the second inequality uses $\ell \leq \frac{k}{2} - 1$. We also have:
\begin{equation}\label{avg_ub}
\sum_{i = \ell+1}^k\bar{\alpha}_i^2 = \sum_{i = \ell+1}^k\frac{1}{(i+1)^2} = \sum_{i = \ell+2}^{k+1}\frac{1}{i^2} \leq \frac{k - \ell}{(\ell + 1)(k + 1)} \leq \frac{\frac{k}{2} + 1.5}{(\frac{k}{2} - \frac{1}{2})(k+1)} = \frac{k+3}{(k-1)(k+1)} \ ,
\end{equation}
where the first inequality uses Proposition \ref{boringbound3} and the second inequality uses $\ell \geq \frac{k}{2} - 1.5$. Applying Theorem \ref{gap-complexity} and using \eqref{avg_lb} and \eqref{avg_ub} yields:
\begin{align*}
\min_{i \in \{1, \ldots, k\}}G_i &\leq \frac{1}{\ln(2)}\left[\frac{B_\ell - h(\lambda_1)}{\beta_{\ell+1}} + \frac{\frac{1}{2}C_{h,Q}\sum_{i = 1}^\ell\frac{\alpha_i^2}{\beta_{i+1}}}{\beta_{\ell+1}} + \frac{\frac{1}{2}C_{h,Q}(k+3)}{(k-1)(k+1)}\right]\\
&\leq \frac{1}{\ln(2)}\left[\frac{\frac{1}{2}C_{h,Q}(1+\ln(\ell+1))}{\ell+1}  + \frac{\frac{1}{2}C_{h,Q}(k+3)}{(k-1)(k+1)}\right]\\
&\leq \frac{\frac{1}{2}C_{h,Q}}{\ln(2)}\left[\frac{1+\ln(\frac{k}{2})}{\frac{k}{2} - \frac{1}{2}}  + \frac{k+3}{(k-1)(k+1)}\right]\\
&\leq \frac{\frac{1}{2}C_{h,Q}}{\ln(2)}\left[\frac{2 + 2\ln(k) - 2\ln(2)}{k - 1}  + \frac{\frac{5}{3}}{k-1}\right]\\
&\leq \frac{\frac{3}{4}C_{h,Q}\left(2.3 + 2\ln(k)\right)}{k-1} \ ,
\end{align*}
where the second inequality uses the bound that proves \eqref{average-ineq}, the third inequality uses $\frac{k}{2} - 1.5 \leq \ell \leq \frac{k}{2} - 1$ and the fourth inequality uses $\frac{k+3}{k+1} \leq \frac{5}{3}$ for $k \geq 2$.\qed

\subsection{Constant Step-size}\label{brick}
Given $\bar \alpha \in (0,1)$, consider using the following constant step-size rule:
\begin{equation}\label{constantstep}
\bar \alpha_i = \bar{\alpha} \ \ \ \ \  \ \mbox{for~} i \ge 1 \ .
\end{equation}This step-size rule arises in the analysis of the Incremental Forward Stagewise Regression algorithm ($\mathrm{FS}_\varepsilon$), see  \cite{gmf2013}, and perhaps elsewhere as well.
\begin{bound}\label{constant-bound}
Under the step-size sequence (\ref{constantstep}), the following inequality holds for all $k \ge 1$:
\begin{equation}\label{constant-bound1}
 B_{k} - h(\lambda_{k+1})  \leq \left(B_k - h(\lambda_1)\right)(1-\bar{\alpha})^{k}+ \tfrac{1}{2}C_{h,Q}\left[ \bar{\alpha}-\bar \alpha(1-\bar{\alpha})^{k} \right] \ .
\end{equation}
If the pre-start step Procedure \ref{prestart} is used, then:
\begin{equation}\label{constant-bound2}
 B_{k} - h(\lambda_{k+1})  \leq \tfrac{1}{2}C_{h,Q}\left[(1-\bar{\alpha})^{k+1} + \bar{\alpha}\right] \ .
\end{equation}
\end{bound}
\noindent If we decide {\em a priori} to run the Frank-Wolfe method for $k$ iterations after the pre-start step Procedure \ref{prestart}, then we can optimize the bound \eqref{constant-bound2} with respect to $\bar{\alpha}$.  The optimized value of $\bar{\alpha}$ in the bound \eqref{constant-bound2} is easily derived to be:
\begin{equation}\label{opt-alpha}
\bar{\alpha}^{\ast} = 1 - \frac{1}{\sqrt[k]{k+1}} \ .
\end{equation}
With $\bar{\alpha}$ determined by (\ref{opt-alpha}), we obtain a simplified bound from \eqref{constant-bound2} and also a guarantee for the FW gap sequence $\{G_k\}$ if the method is continued with the same constant step-size (\ref{opt-alpha}) for an additional $k + 1$ iterations.
\begin{bound}\label{constant-opt-bound}
If we use the pre-start step Procedure \ref{prestart} and the constant step-size sequence \eqref{opt-alpha} for all iterations, then after $k$ iterations the following inequality holds:
\begin{equation}\label{constant-opt-bound1}
B_{k} - h(\lambda_{k+1}) \le \frac{\frac{1}{2}C_{h,Q}\left(1 + \ln(k+1)\right)}{k} \ .
\end{equation}
Furthermore, after $2k + 1$ iterations the following inequality holds:
\begin{equation}\label{constant-opt-bound2}
\min_{i \in \{1, \ldots, 2k+1\}} G_i \leq \frac{\frac{1}{2}C_{h,Q}\left(1 + 2\ln(k+1)\right)}{k}
\end{equation}
\end{bound}
\noindent It is curious to note that the bounds \eqref{average-ineq} and \eqref{constant-opt-bound1} are almost identical, although \eqref{constant-opt-bound1} requires fixing {\em a priori} the number of iterations $k$.\medskip

\noindent {\bf Proof of Bound \ref{constant-bound}:}  Under the step-size rule \eqref{constantstep} it is straightforward to show that the dual averages sequences \eqref{dos} are for $i \ge 1$:
$$ \beta_i = (1-\bar\alpha)^{-k+1}  \ \ \ \ \mathrm{and} \ \ \ \ \alpha_i = \bar\alpha(1-\bar\alpha)^{-k} \ , $$
whereby
$$ \sum_{i=1}^k \frac{\alpha_i^2}{\beta_{i+1}} =  \sum_{i=1}^k \bar\alpha^2(1-\bar\alpha)^{-i} =\bar\alpha^2\left(\frac{\frac{1}{(1-\bar\alpha)^k}-1}{\bar\alpha} \right) = \bar\alpha\left[(1-\bar\alpha)^{-k} -1 \right] \ . $$
It therefore follows from Theorem \ref{fw-complexity} that:
\begin{equation}\label{addon}\begin{array}{rl}
B_k - h(\lambda_{k+1}) & \leq \frac{B_k - h(\lambda_1)}{\beta_{k+1}} + \frac{\frac{1}{2}C_{h,Q}\sum_{i = 1}^k\frac{\alpha_i^2}{\beta_{i+1}}}{\beta_{k+1}} \\ \\
& = \left(B_k - h(\lambda_1)\right)(1-\bar\alpha)^{k} + \left(\frac{C_{h,Q}}{2}\right)\bar\alpha\left[(1-\bar\alpha)^{-k} -1 \right](1-\bar\alpha)^{k} \\ \\
& = \left(B_k - h(\lambda_1)\right)(1-\bar\alpha)^{k} + \left(\frac{C_{h,Q}}{2}\right)\left[\bar\alpha - \bar\alpha (1-\bar\alpha)^{k} \right] \ ,
\end{array}\end{equation} which proves \eqref{constant-bound1}.  If the pre-start step Procedure \ref{prestart} is used, then using Proposition \ref{prestart-property} it follows that $B_k - h(\lambda_1) \le B_0 - h(\lambda_1) \le \frac{1}{2}C_{h,Q}$, whereby from \eqref{constant-bound1} we obtain:
\begin{align*}
B_k - h(\lambda_{k+1}) & \leq  \frac{1}{2}C_{h,Q}(1-\bar\alpha)^{k} + \left(\frac{C_{h,Q}}{2}\right)\left[\bar\alpha - \bar\alpha (1-\bar\alpha)^{k} \right] \\ \\
& =   \frac{1}{2}C_{h,Q} \left[(1-\bar\alpha)^{k+1} + \bar\alpha \right] \ ,
\end{align*}
completing the proof. \qed\medskip

\noindent {\bf Proof of Bound \ref{constant-opt-bound}:}  Substituting the step-size \eqref{opt-alpha} into \eqref{constant-bound2} we obtain:
\begin{align*}
B_k - h(\lambda_{k+1}) & \leq \frac{1}{2}C_{h,Q} \left[\left(\frac{1}{\sqrt[k]{k+1}}\right)^{k+1} +  1 - \frac{1}{\sqrt[k]{k+1}} \right] \\ \\
& \leq \frac{1}{2}C_{h,Q} \left[\left(\frac{1}{\sqrt[k]{k+1}}\right)^{k+1} + \frac{\ln(k+1)}{k} \right] \\ \\
& \leq \frac{1}{2}C_{h,Q} \left[\frac{1}{k+1} + \frac{\ln(k+1)}{k} \right] \\ \\
& \leq \frac{1}{2}C_{h,Q} \left[\frac{1}{k} + \frac{\ln(k+1)}{k} \right] \ ,
\end{align*}where the second inequality follows from {\em (i)} of Proposition \ref{boringbound2}.  This proves \eqref{constant-opt-bound1}.  To prove \eqref{constant-opt-bound2}, notice that inequality \eqref{addon} together with the subsequent chain of inequalities in the proofs of \eqref{constant-bound1}, \eqref{constant-bound2}, and \eqref{constant-opt-bound1} show that:
\begin{equation}\label{late1} \left[\frac{B_k - h(\lambda_1)}{\beta_{k+1}} + \frac{\frac{1}{2}C_{h,Q}\sum_{i = 1}^k\frac{\alpha_i^2}{\beta_{i+1}}}{\beta_{k+1}}\right] \le \frac{1}{2}C_{h,Q} \left( \frac{1+\ln(k+1)}{k} \right) \ . \end{equation}
Using \eqref{late1} and the substitution $\sum_{i=k+1}^{2k+1}\bar\alpha_i = (k+1)\bar\alpha$ and $\sum_{i=k+1}^{2k+1}\bar\alpha_i^2 = (k+1)\bar\alpha^2$ in Theorem \ref{gap-complexity} yields:
\begin{align*}
\min_{i \in \{1, \ldots, 2k+1\}}G_i & \le \frac{1}{(k+1)\bar\alpha}\left( \frac{\tfrac{1}{2}C_{h,Q}(1+\ln(k+1))}{k} \right) + \frac{\frac{1}{2}C_{h,Q}(k+1)\bar\alpha^2}{(k+1)\bar\alpha} \\ \\
& \le \tfrac{1}{2}C_{h,Q}\left( \frac{1+\ln(k+1)}{k} \right) + \tfrac{1}{2}C_{h,Q} \cdot \bar\alpha \\ \\
& \le \frac{\frac{1}{2}C_{h,Q}\left(1 + 2\ln(k+1)\right)}{k} \ ,
\end{align*}where the second inequality uses {\em (ii)} of Proposition \ref{boringbound2} and the third inequality uses {\em (i)} of Proposition \ref{boringbound2}. \qed

\subsection{Extensions using Line-Searches}
The original method of Frank and Wolfe \cite{frank-wolfe} utilized an exact line-search to determine the next iterate $\lambda_{k+1}$ by assigning $\hat\alpha_k \gets \arg\max\limits_{\alpha \in [0,1]}\{h(\lambda_k + \alpha (\tilde\lambda_k - \lambda_k)) \}$ and $\lambda_{k+1} \gets \lambda_k + \hat\alpha_k(\tilde\lambda_k - \lambda_k)$.  When $h(\cdot)$ is a quadratic function and the dimension of the space $E$ of variables $\lambda$ is not huge, an exact line-search is easy to compute analytically.  It is a straightforward extension of Theorem \ref{fw-complexity} to show that if an exact line-search is utilized at every iteration, then the bound \eqref{fw-ineq1} holds for \emph{any} choice of step-size sequence $\{\bar\alpha_k\}$, and not just the sequence $\{\hat\alpha_k\}$ of line-search step-sizes. In particular, the $O(\frac{1}{k})$ computational guarantee \eqref{nemirov-ineq1} holds, as does \eqref{average-ineq} and \eqref{constant-bound1}, as well as the bound \eqref{warmstartbound2} to be developed in Section \ref{sect-warmstart}.  This observation generalizes as follows. At iteration $k$ of the Frank-Wolfe method, let $A_k \subseteq [0,1)$ be a closed set of potential step-sizes and suppose we select the next iterate $\lambda_{k+1}$ using the exact line-search assignment $\hat\alpha_k \gets \arg\max\limits_{\alpha \in A_k}\{h(\lambda_k + \alpha (\tilde\lambda_k - \lambda_k)) \}$ and $\lambda_{k+1} \gets \lambda_k + \hat\alpha_k(\tilde\lambda_k - \lambda_k)$. Then after $k$ iterations of the Frank-Wolfe method, we can apply the bound \eqref{fw-ineq1} for any choice of step-size sequence $\{\bar\alpha_i\}_{i = 1}^k$ in the cross-product $A_1 \times \cdots \times A_k$.\medskip

For inexact line-search methods, Dunn \cite{Dunn1980} analyzes versions of the Frank-Wolfe method with an Armijo line-search and also a Goldstein line-search rule.  In addition to convergence and computational guarantees for convex problems, \cite{Dunn1980} also contains results for the case when the objective function is non-concave.  And in prior work, Dunn \cite{Dunn1979} presents convergence and computational guarantees for the case when the step-size $\bar\alpha_k$ is determined from the structure of the lower quadratic approximation of $h(\cdot)$ in \eqref{clarkson}, if the curvature constant $C_{h,Q}$ is known or upper-bounded.  And in the case when no prior information about $C_{h,Q}$ is given, \cite{Dunn1979} has a clever recursion for determining a step-size that still accounts for the lower quadratic approximation without estimation of $C_{h,Q}$.
\medskip

\section{Computational Guarantees for a Warm Start}\label{sect-warmstart}

In the framework of this study, the well-studied step-size sequence \eqref{nemirovsteps} and associated computational guarantees (Bound \ref{nemirov-complexity}) corresponds to running the Frank-Wolfe method initiated with the pre-start step from the initial point $\lambda_0$.  One feature of the main computational guarantees as presented in the bounds \eqref{nemirov-ineq1} and \eqref{nemirov-gap-ineq1} is their insensitivity to the quality of the initial point $\lambda_0$.  This is good if $h(\lambda_0)$ is very far from the optimal value $h^\ast$, as the poor quality of the initial point does not affect the computational guarantee.  But if $h(\lambda_0)$ is moderately close to the optimal value, one would want the Frank-Wolfe method, with an appropriate step-size sequence, to have computational guarantees that reflect the closeness to optimality of the initial objective function value $h(\lambda_0)$.  Let us see how this can be done.\medskip

We will consider starting the Frank-Wolfe method {\em without} the pre-start step, started at an initial point $\lambda_1$, and let $C_1$ be a given {\em estimate} of the curvature constant $C_{h,Q}$.  Consider the following step-size sequence:
\begin{equation}\label{warmstartstep}
\bar \alpha_i = \displaystyle\frac{2}{\frac{2C_1}{B_1-h(\lambda_1)}+i+1} \ \ \ \ \  \ \mbox{for~} i \ge 1 \ .
\end{equation}
Comparing \eqref{warmstartstep} to the well-studied step-size rule \eqref{nemirovsteps}, one can think of the above step-size rule as acting ``as if'' the Frank-Wolfe method had run for $\frac{2C_1}{B_1-h(\lambda_1)} $ iterations before arriving at $\lambda_1$.  The next result presents a computational guarantee associated with this step-size rule.
\begin{bound}\label{warmstartbound}
Under the step-size sequence (\ref{warmstartstep}), the following inequality holds for all $k \ge 1$:
\begin{equation}\label{warmstartbound1}
B_{k} - h(\lambda_{k+1})  \leq \displaystyle\frac{2\max\{C_1, C_{h,Q}\}}{\frac{2C_1}{B_1-h(\lambda_1)} \ + \  k} \ .
\end{equation}
\end{bound}
\noindent Notice that in the case when $C_1=C_{h,Q}$, the bound in \eqref{warmstartbound1} simplifies conveniently to:
\begin{equation}\label{warmstartbound2}
B_{k} - h(\lambda_{k+1})  \leq \displaystyle\frac{2C_{h,Q}}{\frac{2C_{h,Q}}{B_1-h(\lambda_1)} \ + \  k} \ .
\end{equation}
\noindent Also, as a function of the estimate $C_1$ of the curvature constant, it is easily verified that the bound in \eqref{warmstartbound1} is optimized at $C_1=C_{h,Q}$.\medskip

We remark that the bound \eqref{warmstartbound1} (or \eqref{warmstartbound2}) is small to the extent that the initial bound gap $B_1-h(\lambda_1)$ is small, as one would want.  However, to the extent that $B_1-h(\lambda_1)$ is small, the incremental decrease in the bound due to an additional iteration is less.  In other words, while the bound \eqref{warmstartbound1} is nicely sensitive to the initial bound gap, there is no longer rapid decrease in the bound in the early iterations.  It is as if the algorithm had already run for $\left(\frac{2C_1}{B_1 - h(\lambda_1)}\right)$ iterations to arrive at the initial iterate $\lambda_1$, with a corresponding dampening in the marginal value of each iteration after then.  This is a structural feature of the Frank-Wolfe method that is different from first-order methods that use prox functions and/or projections.\medskip

\noindent {\bf Proof of Bound \ref{warmstartbound}:}  Define $s = \frac{2C_1}{B_1 - h(\lambda_1)}$, whereby $\bar\alpha_i = \frac{2}{s+1+i}$ for $i \ge 1$.  It then is straightforward to show that the dual averages sequences \eqref{dos} are for $i \ge 1$:
$$\beta_i = \prod\limits_{j=1}^{i-1} (1-\bar\alpha_j)^{-1} = \prod\limits_{j=1}^{i-1} \frac{s+j+1}{s+j-1} = \frac{(s+i-1)(s+i)}{s(s+1)} \ , $$ and
$$\alpha_i = \frac{\beta_i \bar\alpha_i}{1-\bar\alpha_i} = \frac{2(s+i)(s+i-1)(s+i+1)}{s(s+1)(s+i+1)(s+i-1)} = \frac{2(s+i)}{s(s+1)} \ . $$
Furthermore, we have:
\begin{equation}\label{cerro} \sum_{i=1}^k\frac{\alpha_i^2}{\beta_{i+1}} = \sum_{i=1}^k\frac{4(s+i)^2(s)(s+1)}{s^2(s+1)^2(s+i)(s+i+1)} = \sum_{i=1}^k\frac{4(s+i)}{s(s+1)(s+i+1)} \le \frac{4k}{s(s+1)} \ .
\end{equation}Utilizing Theorem \ref{fw-complexity} and \eqref{cerro}, we have for $k\ge 1$:
\begin{align*}
B_k - h(\lambda_{k+1}) & \leq \frac{B_k - h(\lambda_1)}{\beta_{k+1}} + \frac{\frac{1}{2}C_{h,Q}\sum_{i = 1}^k\frac{\alpha_i^2}{\beta_{i+1}}}{\beta_{k+1}} \\ \\
& \leq \frac{s(s+1)}{(s+k)(s+k+1)}\left(B_1 - h(\lambda_1)+ \frac{C_{h,Q}}{2}\cdot\frac{4k}{s(s+1)}\right) \\ \\
& = \frac{s(s+1)}{(s+k)(s+k+1)}\left(\frac{2C_1}{s}+ \frac{2 k C_{h,Q}}{s(s+1)}\right) \\ \\
& \le \frac{2\max\{C_1,C_{h,Q}\}}{(s+k)(s+k+1)}\left(s+1+k \right)\\ \\
& = \frac{2\max\{C_1,C_{h,Q}\}}{s+k} \ ,
\end{align*} which completes the proof. \qed

\subsection{A Dynamic Version of the Warm-Start Step-size Strategy}\label{voip}

The step-size sequence \eqref{warmstartstep} determines all step-sizes for the Frank-Wolfe method based on two pieces of information at the initial point $\lambda_1$: {\em (i)} the initial bound gap $B_1 - h(\lambda_1)$, and {\em (ii)} the given estimate $C_1$ of the curvature constant.  The step-size sequence \eqref{warmstartstep} is a static warm-start strategy in that all step-sizes are determined by information that is available or computed at the first iterate.  Let us see how we can improve the computational guarantee by treating every iterate as if it were the initial iterate, and hence dynamically determine the steps-size sequence as a function of accumulated information about the bound gap and the curvature constant.\medskip

At the start of a given iteration $k$ of the Frank-Wolfe method, we have the iterate value $\lambda_k \in Q$ and an upper bound $B_{k-1}$ on $h^*$ from the previous iteration.  We also will now assume that we have an estimate $C_{k-1}$ of the curvature constant from the previous iteration as well.  Steps (2.) and (3.) of the Frank-Wolfe method then perform the computation of $\tilde \lambda_k$, $B_k$ and $G_k$.  Instead of using a pre-set formula for the step-size $\bar\alpha_k$, we will determine the value of $\bar\alpha_k$ based on the current bound gap $B_k-h(\lambda_k)$ as well as on a new estimate $C_{k}$ of the curvature constant. (We will shortly discuss how $C_k$ is computed.)  Assuming $C_k$ has been computed, and mimicking the structure of the static warm-start step-size rule \eqref{warmstartstep}, we compute the current step-size as follows:
\begin{equation}\label{Aa}  \bar\alpha_k:=\frac{2}{\frac{2C_k}{B_k-h(\lambda_k)}+2} \ ,
\end{equation}
\noindent where we note that $\bar\alpha_k$ depends explicitly on the value of $C_k$.  Comparing $\bar\alpha_k$ in \eqref{Aa} with \eqref{nemirovsteps}, we interpret $\frac{2C_k}{B_k-h(\lambda_k)}$ to be ``as if'' the current iteration $k$ was preceded by $\frac{2C_k}{B_k-h(\lambda_k)}$ iterations of the Frank-Wolfe method using the standard step-size \eqref{nemirovsteps}.  This interpretation is also in concert with that of the static warm-start step-size rule \eqref{warmstartstep}.\medskip

We now discuss how we propose to compute the new estimate $C_k$ of the curvature constant $C_{h,Q}$ at iteration $k$.  Because $C_k$ will be only an estimate of $C_{h,Q}$, we will need to require that $C_k$ (and the step-size $\bar\alpha_k$ \eqref{Aa} that depends explicitly on $C_k$) satisfy:
\begin{equation}\label{cme}
h(\lambda_k+\bar\alpha_k(\tilde \lambda_k-\lambda_k)) \ge h(\lambda_k) + \bar \alpha_k(B_k-h(\lambda_k))-\frac{1}{2}C_k\bar\alpha_k^2 \ .
\end{equation}
In order to find a value $C_k \ge C_{k-1}$ for which \eqref{cme} is satisfied, we first test if $C_k := C_{k-1}$ satisfies \eqref{cme}, and if so we set $C_k \leftarrow C_{k-1}$.  If not, one can perform a standard doubling strategy, testing values $C_k \leftarrow 2C_{k-1}, 4C_{k-1}, 8C_{k-1}, \ldots$, until \eqref{cme} is satisfied.  Since \eqref{cme} will be satisfied whenever $C_k \ge C_{h,Q}$ from the definition of $C_{h,Q}$ in \eqref{clarkson} and the inequality $B_k-h(\lambda_k) \le B^w_k-h(\lambda_k)=\nabla h(\lambda_k)^T(\tilde\lambda_k - \lambda_k)$, it follows that the doubling strategy will guarantee $C_k \le \max\{C_0, 2C_{h,Q}\}$. Of course, if an upper bound $\bar{C} \geq C_{h,Q}$ is known, then $C_k \gets \bar{C}$ is a valid assignment for all $k \geq 1$. Moreover, the structure of $h(\cdot)$ may be sufficiently simple so that a value of $C_k \ge C_{k-1}$ satisfying \eqref{cme} can be determined analytically via closed-form calculation, as is the case if $h(\cdot)$ is a quadratic function for example.  The formal description of the Frank-Wolfe method with dynamic step-size strategy is presented in Method \ref{fw-warm-iterates}.\medskip

\begin{algorithm}
\caption{Frank-Wolfe Method with Dynamic Step-sizes for maximizing $h(\lambda)$}\label{fw-warm-iterates}
\begin{algorithmic}
\STATE Initialize at $\lambda_1 \in Q$, initial estimate $C_0$ of $C_{h,Q}$, (optional) initial upper bound $B_0$, $k \gets 1$ .

At iteration $k$:
\STATE 1. Compute $\nabla h (\lambda_k)$ .
\STATE 2. Compute $\tilde \lambda_k \gets \arg\max\limits_{\lambda \in Q}\{h(\lambda_k) + \nabla h (\lambda_k)^T(\lambda - \lambda_k)\}$ .\\
\ \ \ \ \ \ \ \ \ \ $B^w_k \leftarrow h(\lambda_k) + \nabla h (\lambda_k)^T(\tilde \lambda_k - \lambda_k)$ .\\
\ \ \ \ \ \ \ \ \ \ $G_k \leftarrow \nabla h (\lambda_k)^T(\tilde \lambda_k - \lambda_k)$ .\\
\STATE 3. (Optional: compute other upper bound $B^o_k$), update best bound $B_k \leftarrow \min\{B_{k-1}, B^w_k, B^o_k\}$ .\\
\STATE 4. Compute $C_k$ for which the following conditions hold:\\
\ \ \ \ \ \ \ \ \ \ {\em (i)} $C_k \ge C_{k-1}$ , and  \\
\ \ \ \ \ \ \ \ \ \ {\em (ii)} $h(\lambda_k+\bar\alpha_k(\tilde \lambda_k-\lambda_k)) \ge h(\lambda_k) + \bar \alpha_k(B_k-h(\lambda_k))-\frac{1}{2}C_k\bar\alpha_k^2$ , where $\bar\alpha_k:=\frac{2}{\frac{2C_k}{B_k-h(\lambda_k)}+2}$ .\\
\STATE 5. Set $\lambda_{k+1} \gets \lambda_k + \bar{\alpha}_k(\tilde \lambda_k - \lambda_k)$ .
\end{algorithmic}
\end{algorithm}

\noindent We have the following computational guarantees for the Frank-Wolfe method with dynamic step-sizes (Method \ref{fw-warm-iterates}):

\begin{bound}\label{warmiteratesbound}
The iterates of the Frank-Wolfe method with dynamic step-sizes (Method \ref{fw-warm-iterates}) satisfy the following for any $k\ge 1$:
\begin{equation}\label{dynamic-conclusion}
B_{k} - h(\lambda_{k})  \leq \min_{\ell \in \{1, \ldots, k\}}\left\{\displaystyle\frac{2C_{k}}{\frac{2C_{k}}{B_\ell-h(\lambda_\ell)} \ + \ k - \ell }\right\} \ .
\end{equation}  Furthermore, if the doubling strategy is used to update the estimates $\{C_k\}$ of $C_{h,Q}$, it holds that $C_{k} \le \max\{C_0, 2C_{h,Q}\}$.
\end{bound}

\noindent Notice that \eqref{dynamic-conclusion} naturally generalizes the static warm-start bound \eqref{warmstartbound1} (or \eqref{warmstartbound2}) to this more general dynamic case. Consider, for simplicity, the case where $C_k = C_{h,Q}$ is the known curvature constant. In this case, \eqref{dynamic-conclusion} says that we may apply the bound \eqref{warmstartbound2} with any $\ell \in \{1, \ldots, k\}$ as the starting iteration. That is, the computational guarantee for the dynamic case is at least as good as the computational guarantee for the static warm-start step-size \eqref{warmstartstep} initialized at \emph{any} iteration $\ell \in \{1, \ldots, k\}$.\medskip

\noindent {\bf Proof of Bound \ref{warmiteratesbound}:}  Let $i \ge 1$.  For convenience define $A_i = \frac{2C_i}{B_i - h(\lambda_i)}$, and in this notation \eqref{Aa} is $\bar\alpha_i = \frac{2}{A_i + 2}$. Applying \emph{(ii)} in step (4.) of Method \ref{fw-warm-iterates} we have:
\begin{align*}
B_{i+1} - h(\lambda_{i+1}) & \le B_{i+1} - h(\lambda_i) - \bar\alpha_i (B_i - h(\lambda_i)) +\tfrac{1}{2}\bar\alpha_i^2C_i \\ \\
& \le B_i - h(\lambda_i) - \bar\alpha_i (B_i - h(\lambda_i)) +\tfrac{1}{2}\bar\alpha_i^2C_i \\ \\
& = (B_i - h(\lambda_i))(1 - \bar\alpha_i ) +\tfrac{1}{2}\bar\alpha_i^2C_i \\ \\
& = \frac{2C_i}{A_i}\left(\frac{A_i}{A_i + 2}\right) + \frac{2C_i}{(A_i + 2)^2} \\ \\
& = 2C_i\left(\frac{A_i + 3}{(A_i + 2)^2}\right) \\ \\
& < \frac{2C_i}{A_i + 1} \ ,
\end{align*} where the last inequality follows from the fact that $(a+2)^2 > a^2 + 4a + 3 = (a+1)(a+3)$ for $a \ge 0$.  Therefore
\begin{equation}\label{Bb}
A_{i+1} = \frac{2C_{i+1}}{B_{i+1} - h(\lambda_{i+1})} = \frac{C_{i+1}}{C_{i}}\left(\frac{2C_{i}}{B_{i+1} - h(\lambda_{i+1})}\right) > \frac{C_{i+1}}{C_{i}}\left( A_i + 1 \right) \ .
\end{equation}We now show by reverse induction that for any $\ell \in \{1, \ldots, k\}$ the following inequality is true:
\begin{equation}\label{induction}
A_{k} \geq \frac{C_{k}}{C_{\ell}}A_\ell + k - \ell \ .
\end{equation}
Clearly \eqref{induction} holds for $\ell=k$, so let us suppose \eqref{induction} holds for some $\ell + 1 \in \{2, \ldots, k\}$. Then
\begin{align*}
A_{k} &\geq \frac{C_{k}}{C_{\ell + 1}}A_{\ell + 1} + k - \ell - 1 \\ \\
& > \frac{C_{k}}{C_{\ell + 1}}\left(\frac{C_{\ell+1}}{C_{\ell}}\left(A_\ell + 1 \right)\right) + k - \ell - 1 \\ \\
& \ge \frac{C_{k}}{C_{\ell}}A_\ell + k - \ell \ ,
\end{align*}
where the first inequality is the induction hypothesis, the second inequality uses \eqref{Bb}, and the third inequality uses the monotonicity of the $\{C_k\}$ sequence.  This proves \eqref{induction}.  Now for any $\ell \in \{1, \ldots, k\}$ we have from \eqref{induction} that:
$$ B_{k} - h(\lambda_{k}) = \frac{2C_{k}}{A_{k}} \leq \frac{2C_{k}}{\frac{C_{k}}{C_{\ell}}A_\ell + k - \ell}  = \frac{2C_{k}}{\frac{2C_{k}}{B_{\ell} - h(\lambda_{\ell})} + k - \ell } \ , $$proving the result. \qed

\section{Analysis of the Frank-Wolfe Method with Inexact Gradient Computations and/or Subproblem Solutions}\label{sect-approx}

In this section we present and analyze extensions of the Frank-Wolfe method in the presence of inexact computation of gradients and/or subproblem solutions.  We first consider the case when the linear optimization subproblem is solved approximately.

\subsection{Frank-Wolfe Method with Inexact Linear Optimization Subproblem Solutions}\label{approx-subprob}

Here we consider the case when the linear optimization subproblem is solved approximately, which arises especially in optimization over matrix variables.  For example, consider instances of \eqref{poi3} where $Q$ is the spectrahedron of symmetric matrices, namely $Q=\{ \Lambda \in \mathbb{S}^{n \times n} : \Lambda \succeq 0, \ I \bullet \Lambda = 1\}$, where $\mathbb{S}^{n \times n}$ is the space of symmetric matrices of order $n$, ``$\succeq$'' is the L\"{o}wner ordering thereon, and ``$\cdot \bullet \cdot$'' denotes the trace inner product.  For these instances solving the linear optimization subproblem corresponds to computing the leading eigenvector of a symmetric matrix, whose solution when $n \gg 0$ is typically computed inexactly using iterative methods.   For $\delta \geq 0$ an (absolute) $\delta$-approximate solution to the linear optimization subproblem $\max\limits_{\lambda \in Q}\left\{c^T\lambda\right\}$ is a vector $\tilde{\lambda} \in Q$ satisfying:
\begin{equation}\label{approx-def}
c^T\tilde{\lambda} \geq \max\limits_{\lambda \in Q}\left\{c^T\lambda\right\} - \delta \ ,
\end{equation}
and we use the notation $\tilde{\lambda} \gets \appr(\delta)_{\lambda \in Q}\left\{c^T\lambda\right\}$ to denote assigning to $\tilde{\lambda}$ any such $\delta$-approximate solution.  The same additive linear optimization subproblem approximation model is  considered in Dunn and Harshbarger \cite{Dunn1978} and Jaggi \cite{jaggi2013revisiting}, and a multiplicative linear optimization subproblem approximation model is considered in Lacoste-Julien et al. \cite{lacoste2012block}; a related approximation model is used in connection with a greedy coordinate descent method in Dud\'{i}k et al. \cite{dudik}.  In Method \ref{fw-approx} we present a version of the Frank-Wolfe algorithm that uses approximate linear optimization subproblem solutions.  Note that Method \ref{fw-approx} allows for the approximation quality $\delta = \delta_k$ to be a function of the iteration index $k$.  Note also that the definition of the Wolfe upper bound $B^w_k$ and the FW gap $G_k$ in step (2.) are amended from the original Frank-Wolfe algorithm (Method \ref{fw-basic}) by an additional term $\delta_k$.  It follows from \eqref{approx-def} that:
\begin{equation*}
B^w_k = h(\lambda_k) + \nabla h (\lambda_k)^T(\tilde \lambda_k - \lambda_k) + \delta_k \geq \max_{\lambda \in Q}\left\{h(\lambda_k) + \nabla h (\lambda_k)^T(\lambda - \lambda_k)\right\} \geq h^\ast \ ,
\end{equation*}which shows that $B^w_k$ is a valid upper bound on $h^*$, with similar properties for $G_k$.  The following two theorems extend Theorem \ref{fw-complexity} and Theorem \ref{gap-complexity} to the case of approximate subproblem solutions.  Analogous to the the case of exact subproblem solutions, these two theorems can easily be used to derive suitable bounds for specific step-sizes rules such as those in Sections \ref{sect-stepsize} and \ref{sect-warmstart}.\medskip

\begin{algorithm}
\caption{Frank-Wolfe Method with Approximate Subproblem Solutions}\label{fw-approx}
\begin{algorithmic}
\STATE Initialize at $\lambda_1 \in Q$, (optional) initial upper bound $B_0$, $k \gets 1$ .

At iteration $k$:
\STATE 1. Compute $\nabla h (\lambda_k)$ .
\STATE 2. Compute $\tilde \lambda_k \gets \appr(\delta_k)_{\lambda \in Q}\{h(\lambda_k) + \nabla h (\lambda_k)^T(\lambda - \lambda_k)\}$ .\\
\ \ \ \ \ \ \ \ \ \ $B^w_k \leftarrow h(\lambda_k) + \nabla h (\lambda_k)^T(\tilde \lambda_k - \lambda_k) + \delta_k$ .\\
\ \ \ \ \ \ \ \ \ \ $G_k \leftarrow \nabla h (\lambda_k)^T(\tilde \lambda_k - \lambda_k) + \delta_k$ .\\
\STATE 3. (Optional: compute other upper bound $B^o_k$), update best bound $B_k \leftarrow \min\{B_{k-1}, B^w_k, B^o_k\}$ .
\STATE 4. Set $\lambda_{k+1} \gets \lambda_k + \bar{\alpha}_k(\tilde \lambda_k - \lambda_k)$, where $\bar{\alpha}_k \in [0,1)$ .
\end{algorithmic}
\end{algorithm}\medskip

\begin{theorem}\label{fwapprox-complexity}
Consider the iterate sequences of the Frank-Wolfe method with approximate subproblem solutions (Method \ref{fw-approx}) $\{\lambda_k\}$ and $\{\tilde \lambda_k\}$ and the sequence of upper bounds $\{B_k\}$ on $h^*$, using the step-size sequence $\{\bar\alpha_k\}$.  For the auxiliary sequences $\{\alpha_k\}$ and $\{\beta_k\}$ given by (\ref{dos}), and for any $k \geq 0$, the following inequality holds:
\begin{equation}\label{fwapprox-ineq1}
B_k - h(\lambda_{k+1}) \leq \frac{B_k - h(\lambda_1)}{\beta_{k+1}} + \frac{\frac{1}{2}C_{h,Q}\sum_{i = 1}^k\frac{\alpha_i^2}{\beta_{i+1}}}{\beta_{k+1}} + \frac{\sum_{i = 1}^k\alpha_i\delta_i}{\beta_{k+1}} \ .
\end{equation}\qed
\end{theorem}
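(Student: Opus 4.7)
The plan is to mimic the proof of Theorem \ref{fw-complexity} essentially verbatim, accounting for the single extra term that arises because the approximate subproblem solution $\tilde\lambda_k$ no longer maximizes the linearization exactly. Concretely, the modified Wolfe bound now satisfies $B^w_k = h(\lambda_k) + \nabla h(\lambda_k)^T(\tilde\lambda_k - \lambda_k) + \delta_k$, so the identity
\[
h(\lambda_i) + \nabla h(\lambda_i)^T(\tilde\lambda_i - \lambda_i) \;=\; B^w_i - \delta_i
\]
replaces the clean equality used before. Inserting this into the chain of inequalities will produce exactly one extra additive term per iteration, which will then be carried through the telescoping sum.

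More specifically, I would prove the slightly stronger statement
\[
\min\{B,B_k\} - h(\lambda_{k+1}) \leq \frac{B - h(\lambda_1)}{\beta_{k+1}} + \frac{\tfrac{1}{2}C_{h,Q}\sum_{i=1}^k \alpha_i^2/\beta_{i+1}}{\beta_{k+1}} + \frac{\sum_{i=1}^k \alpha_i \delta_i}{\beta_{k+1}}
\]
for arbitrary scalar $B$, from which \eqref{fwapprox-ineq1} follows by setting $B=B_k$. The base case $k=0$ is immediate from the conventions on empty products/sums together with $\beta_1=1$. For $k\ge 1$, I would start from the curvature inequality
\[
h(\lambda_{i+1}) \;\ge\; h(\lambda_i) + \bar\alpha_i \nabla h(\lambda_i)^T(\tilde\lambda_i - \lambda_i) - \tfrac{1}{2}\bar\alpha_i^2 C_{h,Q},
\]
multiply by $\beta_{i+1}$, and use the two identities in \eqref{identity1} to convert the $\beta_{i+1}\bar\alpha_i$ factor into $\alpha_i$ and the $\beta_{i+1}\bar\alpha_i^2$ factor into $\alpha_i^2/\beta_{i+1}$. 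Substituting $\nabla h(\lambda_i)^T(\tilde\lambda_i - \lambda_i) = B^w_i - h(\lambda_i) - \delta_i$ then yields
\[
\beta_{i+1} h(\lambda_{i+1}) \;\ge\; \beta_i h(\lambda_i) + \alpha_i B^w_i - \alpha_i \delta_i - \tfrac{1}{2}\tfrac{\alpha_i^2}{\beta_{i+1}} C_{h,Q},
\]
which is precisely the earlier chain with the single extra $-\alpha_i\delta_i$ term.

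From here the argument parallels the original exactly: rearrange, add an arbitrary constant $B$ to both sides, and sum over $i \in \{1,\ldots,k\}$ to obtain
\[
B + \sum_{i=1}^k \alpha_i B^w_i \;\le\; B + \beta_{k+1} h(\lambda_{k+1}) - h(\lambda_1) + \sum_{i=1}^k \alpha_i \delta_i + \tfrac{1}{2}\sum_{i=1}^k \tfrac{\alpha_i^2}{\beta_{i+1}} C_{h,Q}.
\]
Then apply identity \eqref{identity2} on the left via
\[
\min\{B,B_k\}\beta_{k+1} = \min\{B,B_k\}\Bigl(1 + \sum_{i=1}^k \alpha_i\Bigr) \;\le\; B + \sum_{i=1}^k \alpha_i B^w_i,
\]
using that $B_k \le B^w_i$ for $i \le k$ (which remains true in the approximate setting because $B^w_i$, with the added $\delta_i$, is still a valid upper bound on $h^*$, hence enters the definition of $B_k$ in the same way). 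Dividing by $\beta_{k+1}$ and rearranging finishes the proof. There is no real obstacle here: the only substantive change relative to Theorem \ref{fw-complexity} is isolating the extra $\alpha_i\delta_i$ term cleanly, and the validity of $B^w_k$ as an upper bound on $h^*$ under approximate subproblem solutions (already established in the excerpt) ensures that step (3.) still produces a legitimate $B_k$ upper bounding $h^*$.
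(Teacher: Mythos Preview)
Your proposal is correct and matches the paper's own proof essentially verbatim: the paper too derives $\beta_{i+1}h(\lambda_{i+1}) \ge \beta_i h(\lambda_i) + \alpha_i B^w_i - \alpha_i\delta_i - \tfrac{1}{2}\tfrac{\alpha_i^2}{\beta_{i+1}}C_{h,Q}$ by inserting the modified Wolfe bound $B^w_i = h(\lambda_i) + \nabla h(\lambda_i)^T(\tilde\lambda_i-\lambda_i) + \delta_i$, and then states that ``the rest of the proof follows exactly as in the proof of Theorem \ref{fw-complexity}.''
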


\begin{theorem}\label{gapapprox-complexity}
Consider the iterate sequences of the Frank-Wolfe method with approximate subproblem solutions (Method \ref{fw-approx}) $\{\lambda_k\}$ and $\{\tilde \lambda_k\}$, the sequence of upper bounds $\{B_k\}$ on $h^*$, and the sequence of FW gaps $\{G_k\}$ from step (2.), using the step-size sequence $\{\bar\alpha_k\}$.  For the auxiliary sequences $\{\alpha_k\}$ and $\{\beta_k\}$ given by (\ref{dos}), and for any $\ell \geq 0$ and $k \geq \ell + 1$, the following inequality holds:
\begin{align}\label{gapapprox-ineq1}
\min_{i \in \{\ell+1, \ldots, k\}}G_i \leq \ \ &\frac{1}{\sum_{i = \ell+1}^k\bar{\alpha}_i}\left[\frac{B_\ell - h(\lambda_1)}{\beta_{\ell+1}} + \frac{\frac{1}{2}C_{h,Q}\sum_{i = 1}^\ell\frac{\alpha_i^2}{\beta_{i+1}}}{\beta_{\ell+1}} + \frac{\sum_{i = 1}^\ell\alpha_i\delta_i}{\beta_{\ell+1}}\right]\\
&+ \frac{\frac{1}{2}C_{h,Q}\sum_{i = \ell+1}^k\bar{\alpha}_i^2}{\sum_{i=\ell+1}^k\bar{\alpha}_i} + \frac{\sum_{i = \ell+1}^k\bar{\alpha}_i\delta_i}{\sum_{i=\ell+1}^k\bar{\alpha}_i} \ . \nonumber
\end{align}\qed
\end{theorem}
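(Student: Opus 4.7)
The plan is to closely parallel the proof of Theorem \ref{gap-complexity}, making exactly two adjustments to account for inexactness. First, the identity $G_i = \nabla h(\lambda_i)^T(\tilde\lambda_i - \lambda_i) + \delta_i$ from step (2.) of Method \ref{fw-approx} replaces the exact-case identity \eqref{wolfegap}. Second, wherever the exact bound of Theorem \ref{fw-complexity} was invoked, I would substitute its approximate counterpart Theorem \ref{fwapprox-complexity}, which carries the extra additive term $\sum_{i=1}^\ell\alpha_i\delta_i/\beta_{\ell+1}$.

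First I would apply the curvature inequality \eqref{clarkson} with $\alpha = \bar\alpha_i$ and $\bar\lambda = \tilde\lambda_i$, and then substitute $\nabla h(\lambda_i)^T(\tilde\lambda_i - \lambda_i) = G_i - \delta_i$ using the amended definition of $G_i$ in Method \ref{fw-approx}, to obtain the per-iteration recurrence
$$h(\lambda_{i+1}) \ge h(\lambda_i) + \bar\alpha_i G_i - \bar\alpha_i\delta_i - \tfrac{1}{2}\bar\alpha_i^2 C_{h,Q}.$$
Telescoping this inequality over $i \in \{\ell+1, \ldots, k\}$ and rearranging yields the direct analogue of \eqref{trio}, namely
$$\sum_{i=\ell+1}^k \bar\alpha_i G_i \le h(\lambda_{k+1}) - h(\lambda_{\ell+1}) + \tfrac{1}{2}C_{h,Q}\sum_{i=\ell+1}^k \bar\alpha_i^2 + \sum_{i=\ell+1}^k \bar\alpha_i\delta_i,$$
where the final sum is the new error contribution absent in \eqref{trio}.

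Next I would invoke Theorem \ref{fwapprox-complexity} at iteration $\ell$ to upper-bound $-h(\lambda_{\ell+1})$ by
$$-B_\ell + \frac{B_\ell - h(\lambda_1)}{\beta_{\ell+1}} + \frac{\frac{1}{2}C_{h,Q}\sum_{i=1}^\ell \alpha_i^2/\beta_{i+1}}{\beta_{\ell+1}} + \frac{\sum_{i=1}^\ell \alpha_i\delta_i}{\beta_{\ell+1}},$$
then appeal to weak duality $B_\ell \ge h^\ast \ge h(\lambda_{k+1})$ to discard the nonpositive residual $h(\lambda_{k+1}) - B_\ell$. Finally, since the amended definition of $B^w_k$ ensures $B^w_i \ge h^\ast$, we have $G_i = B^w_i - h(\lambda_i) \ge 0$ for each $i$, so the left side may be bounded from below by $\bigl(\min_{i \in \{\ell+1,\ldots,k\}}G_i\bigr)\bigl(\sum_{i=\ell+1}^k \bar\alpha_i\bigr)$; dividing through by $\sum_{i=\ell+1}^k \bar\alpha_i$ produces \eqref{gapapprox-ineq1}.

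The argument is mechanical once Theorem \ref{fwapprox-complexity} is available, so there is no real analytical obstacle. The only point deserving attention is the bookkeeping for the two separate $\delta$-error contributions: the batch $\sum_{i=1}^\ell \alpha_i\delta_i$ is inherited through the approximate bound on $h(\lambda_{\ell+1})$, while the batch $\sum_{i=\ell+1}^k \bar\alpha_i\delta_i$ arises from the extra $-\bar\alpha_i\delta_i$ term in the new telescoped recurrence. Keeping these two separate ranges of the index straight is what produces the two distinct error terms visible in \eqref{gapapprox-ineq1}.
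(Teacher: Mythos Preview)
Your proposal is correct and follows essentially the same route as the paper's own proof: derive the per-iteration inequality $h(\lambda_{i+1}) \ge h(\lambda_i) + \bar\alpha_i G_i - \bar\alpha_i\delta_i - \tfrac{1}{2}\bar\alpha_i^2 C_{h,Q}$ from \eqref{clarkson} and the amended definition of $G_i$, telescope over $i\in\{\ell+1,\ldots,k\}$ to obtain the analogue of \eqref{trio} with the extra $\sum \bar\alpha_i\delta_i$ term, then combine with Theorem \ref{fwapprox-complexity} and finish exactly as in the proof of Theorem \ref{gap-complexity}. Your added remarks on the nonnegativity of $G_i$ and the separate bookkeeping of the two $\delta$-sums are accurate and do not depart from the paper's argument.
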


\begin{remark}\label{prestart-property-approx}  The pre-start step (Procedure \ref{prestart} can also be generalized to the case of approximate solution of the linear optimization subproblem.  Let $\lambda_1$ and $B_0$ be computed by the pre-start step with a $\delta =\delta_0$-approximate subproblem solution.  Then Proposition \ref{prestart-property} generalizes to: $$B_0 - h(\lambda_1) \le \frac{1}{2}C_{h,Q} + \delta_0 \ , $$ and hence if the pre-start step is used \eqref{fwapprox-ineq1} implies that:
\begin{equation}\label{electric}
B_k - h(\lambda_{k+1}) \leq \frac{\frac{1}{2}C_{h,Q}\sum_{i = 0}^k\frac{\alpha_i^2}{\beta_{i+1}}}{\beta_{k+1}} + \frac{\sum_{i = 0}^k\alpha_i\delta_i}{\beta_{k+1}} \ ,
\end{equation}
where $\alpha_0 := 1$.
\end{remark}\medskip

Let us now discuss implications of Theorems \ref{fwapprox-complexity} and \ref{gapapprox-complexity}, and Remark \ref{prestart-property-approx}.  Observe that the bounds on the right-hand sides of \eqref{fwapprox-ineq1} and \eqref{gapapprox-ineq1} are composed of the exact terms which appear on the right-hand sides of \eqref{fw-ineq1} and \eqref{gap-ineq1}, plus additional terms involving the solution accuracy sequence $\delta_1, \ldots, \delta_k$.  It follows from \eqref{identity2} that these latter terms are particular convex combinations of the $\delta_i$ values and zero, and in \eqref{electric} the last term is a convex combination of the $\delta_i$ values, whereby they are trivially bounded above by $\max\{\delta_1, \ldots, \delta_k\}$.  When $\delta_i := \delta$ is a constant, then this bound is simply $\delta$, and we see that the errors due to the approximate computation of linear optimization subproblem solutions do not accumulate, independent of the choice of step-size sequence $\{\bar{\alpha}_k\}$.   In other words, Theorem \ref{fwapprox-complexity} implies that if we are able to solve the linear optimization subproblems to an accuracy of $\delta$, then the Frank-Wolfe method can solve \eqref{poi3} to an accuracy of $\delta$ plus a function of the step-size sequence $\{\bar{\alpha}_k\}$, the latter of which can be made to go to zero at an appropriate rate depending on the choice of step-sizes.  Similar observations hold for the terms depending on $\delta_1, \ldots, \delta_k$ that appear on the right-hand side of \eqref{gapapprox-ineq1}.\medskip

Note that Jaggi \cite{jaggi2013revisiting} considers the case where $\delta_i := \frac{1}{2}\delta\bar{\alpha}_iC_{h,Q}$ (for some fixed $\delta \geq 0$) and $\bar{\alpha}_i := \frac{2}{i+2}$ for $i \geq 0$ (or $\bar{\alpha}_i$ is determined by a line-search), and shows that in this case Method \ref{fw-approx} achieves $O\left(\frac{1}{k}\right)$ convergence in terms of both the optimality gap and the FW gaps.  These results can be recovered as a particular instantiation of Theorems \ref{fwapprox-complexity} and \ref{gapapprox-complexity} using similar logic as in the proof of Bound \ref{nemirov-complexity}.\medskip

\noindent {\bf Proof of Theorem \ref{fwapprox-complexity}:}
First recall the identities \eqref{identity1} and \eqref{identity2} for the dual averages sequences \eqref{dos}.  Following the proof of Theorem \ref{fw-complexity}, we then have for $i \ge 1$:
\begin{align*}
\beta_{i+1} h(\lambda_{i+1}) & \ge \beta_{i+1} \left[h(\lambda_i) + \nabla h(\lambda_i)^T(\tilde\lambda_i-\lambda_i)\bar\alpha_i - \frac{1}{2}\bar\alpha_i^2C_{h,Q} \right] \\
& = \beta_{i} h(\lambda_i) + (\beta_{i+1} - \beta_{i})h(\lambda_i) +  \beta_{i+1}\bar\alpha_i\nabla h(\lambda_i)^T(\tilde\lambda_i-\lambda_i) - \frac{1}{2}\beta_{i+1}\bar\alpha_i^2C_{h,Q} \\
& = \beta_{i} h(\lambda_i) + \alpha_{i}\left[h(\lambda_i) +  \nabla h(\lambda_i)^T(\tilde\lambda_i-\lambda_i)\right] - \frac{1}{2}\frac{\alpha_i^2}{\beta_{i+1}}C_{h,Q} \\
& = \beta_{i} h(\lambda_i) + \alpha_{i}B^w_i - \alpha_i\delta_i - \frac{1}{2}\frac{\alpha_i^2}{\beta_{i+1}}C_{h,Q} \ ,
\end{align*}
where the third equality above uses the definition of the Wolfe upper bound \eqref{wolfebound} in Method \ref{fw-approx}.  The rest of the proof follows exactly as in the proof of Theorem \ref{fw-complexity}. \qed\medskip

\noindent {\bf Proof of Theorem \ref{gapapprox-complexity}:}  For $i\ge 1$ we have:
\begin{equation*}\begin{array}{rl}
h(\lambda_{i+1}) & \ge h(\lambda_i) + \nabla h(\lambda_i)^T(\tilde\lambda_i-\lambda_i)\bar\alpha_i - \frac{1}{2}\bar\alpha_i^2C_{h,Q} \\ \\
& = h(\lambda_i) + \bar\alpha_{i}G_i - \bar\alpha_i\delta_i - \frac{1}{2}\bar\alpha_i^2C_{h,Q} \ ,
\end{array}\end{equation*}
where the equality above follows from the definition of the FW gap in Method \ref{fw-approx}. Summing the above over $i \in \{\ell + 1, \ldots, k\}$ and rearranging yields:
\begin{equation}\begin{array}{rl}\label{trioapp}
\sum_{i=\ell+1}^k \bar\alpha_{i}G_i & \le h(\lambda_{k+1}) - h(\lambda_{\ell+1}) +  \sum_{i=\ell+1}^k\frac{1}{2}\bar\alpha_i^2C_{h,Q} + \sum_{i = \ell+1}^k\bar{\alpha}_i\delta_i \ .
\end{array}
\end{equation}
The rest of the proof follows by combining \eqref{trioapp} with Theorem \ref{fwapprox-complexity} and proceeding as in the proof of Theorem \ref{gap-complexity}. \qed

\subsection{Frank-Wolfe Method with Inexact Gradient Computations}\label{subsect-aspremont}

We now consider a version of the Frank-Wolfe method where the exact gradient computation is replaced with the computation of an approximate gradient, as was explored in Section 3 of Jaggi \cite{jaggi2013revisiting}.  We analyze two different models of approximate gradients and derive computational guarantees for each model.  We first consider the $\delta$-oracle model of d'Aspremont \cite{daspremontapprox}, which was developed in the context of accelerated first-order methods. For $\delta \geq 0$, a $\delta$-oracle is a (possibly non-unique) mapping $g_\delta(\cdot): Q \to E^\ast$ that satisfies:
\begin{equation}\label{dCondition}
\left|(\nabla h(\bar{\lambda}) - g_\delta(\bar{\lambda}))^T(\lambda - \bar{\lambda})\right| \leq \delta \ \ \text{ for all } \lambda, \bar{\lambda} \in Q \ .
\end{equation}
Note that the definition of the $\delta$-oracle does not consider inexact computation of function values.  Depending on the choice of step-size sequence $\{\bar{\alpha}_k\}$, this assumption is acceptable as the Frank-Wolfe method may or may not need to compute function values.  (The warm-start step-size rule \eqref{Aa} requires computing function values, as does the computation of the upper bounds $\{B_k^w\}$, in which case a definition analogous to \eqref{dCondition} for function values can be utilized.)\medskip

The next proposition states the following:  suppose one solves for the exact solution of the linear optimization subproblem using the $\delta$-oracle instead of the exact gradient.  Then the absolute suboptimality of the computed solution in terms of the exact gradient is at most $2\delta$.\medskip

\begin{proposition}\label{dProp}
For any $\bar{\lambda} \in Q$ and any $\delta \geq 0$, if $\tilde{\lambda} \in \arg\max\limits_{\lambda \in Q}\left\{g_\delta(\bar{\lambda})^T\lambda\right\}$, then $\tilde{\lambda}$ is a $2\delta$-approximate solution to the linear optimization subproblem $\max\limits_{\lambda \in Q}\left\{\nabla h(\bar{\lambda})^T\lambda\right\}$.
\end{proposition}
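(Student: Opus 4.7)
My plan is to reduce the claim to a simple triangle-inequality style argument that swaps $\nabla h(\bar\lambda)$ for $g_\delta(\bar\lambda)$ twice, paying a penalty of $\delta$ each time by invoking \eqref{dCondition}. Concretely, let $\lambda^\ast \in \arg\max_{\lambda \in Q}\{\nabla h(\bar\lambda)^T\lambda\}$ be an exact maximizer of the true linear subproblem. The goal is to show $\nabla h(\bar\lambda)^T(\lambda^\ast - \tilde\lambda) \le 2\delta$.

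First I would write
\begin{equation*}
\nabla h(\bar\lambda)^T(\lambda^\ast - \tilde\lambda) \;=\; \nabla h(\bar\lambda)^T(\lambda^\ast - \bar\lambda) \;-\; \nabla h(\bar\lambda)^T(\tilde\lambda - \bar\lambda),
\end{equation*}
and then in each of the two terms on the right replace $\nabla h(\bar\lambda)$ by $g_\delta(\bar\lambda)$, using \eqref{dCondition} (applied once with $\lambda = \lambda^\ast$ and once with $\lambda = \tilde\lambda$) to bound the resulting discrepancies by $\delta$ in absolute value. This produces
\begin{equation*}
\nabla h(\bar\lambda)^T(\lambda^\ast - \tilde\lambda) \;\le\; g_\delta(\bar\lambda)^T(\lambda^\ast - \bar\lambda) - g_\delta(\bar\lambda)^T(\tilde\lambda - \bar\lambda) + 2\delta \;=\; g_\delta(\bar\lambda)^T(\lambda^\ast - \tilde\lambda) + 2\delta.
\end{equation*}

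The last step is to kill the remaining $g_\delta$ term using optimality of $\tilde\lambda$: since $\tilde\lambda \in \arg\max_{\lambda \in Q}\{g_\delta(\bar\lambda)^T \lambda\}$ and $\lambda^\ast \in Q$, we have $g_\delta(\bar\lambda)^T(\lambda^\ast - \tilde\lambda) \le 0$. Substituting gives $\nabla h(\bar\lambda)^T(\lambda^\ast - \tilde\lambda) \le 2\delta$, which is precisely the $2\delta$-approximate optimality condition \eqref{approx-def} for $\tilde\lambda$ with respect to the subproblem $\max_{\lambda \in Q}\{\nabla h(\bar\lambda)^T\lambda\}$.

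There is no real obstacle here: the entire proof is essentially bookkeeping once one recognizes that the $\delta$-oracle property \eqref{dCondition} controls inner products against increments $\lambda - \bar\lambda$ rather than raw vectors, which is exactly why the decomposition through the base point $\bar\lambda$ is the natural move. The factor of $2$ is unavoidable because we must pay $\delta$ once to translate optimality of $\tilde\lambda$ under $g_\delta$ into a statement about $\tilde\lambda$ under $\nabla h$, and a second $\delta$ to account for $\lambda^\ast$.
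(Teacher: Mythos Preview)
Your proof is correct and is essentially the same as the paper's: both introduce the exact maximizer (your $\lambda^\ast$, the paper's $\hat\lambda$), apply \eqref{dCondition} twice (once at $\lambda^\ast$ and once at $\tilde\lambda$) via the base point $\bar\lambda$, and use optimality of $\tilde\lambda$ for $g_\delta(\bar\lambda)$. The only cosmetic difference is ordering: the paper interleaves the optimality inequality between the two applications of \eqref{dCondition} in a single chain lower-bounding $\nabla h(\bar\lambda)^T(\tilde\lambda-\bar\lambda)$, whereas you apply both $\delta$-bounds first and the optimality last.
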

\begin{proof}
Let $\hat{\lambda} \in \arg\max\limits_{\lambda \in Q}\left\{\nabla h(\bar{\lambda})^T\lambda\right\}$. Then, we have:
\begin{align*}
\nabla h(\bar{\lambda})^T(\tilde{\lambda} - \bar{\lambda}) &\geq g_\delta(\bar{\lambda})^T(\tilde{\lambda} - \bar{\lambda}) - \delta\\
&\geq g_\delta(\bar{\lambda})^T(\hat{\lambda} - \bar{\lambda}) - \delta\\
&\geq \nabla h(\bar{\lambda})^T(\hat{\lambda} - \bar{\lambda}) - 2\delta\\
&= \max_{\lambda \in Q}\left\{\nabla h(\bar{\lambda})^T\lambda\right\} - \nabla h(\bar{\lambda})^T\bar{\lambda}) -2\delta \ ,
\end{align*}
where the first and third inequalities use \eqref{dCondition}, the second inequality follows since $\tilde{\lambda} \in \arg\max\limits_{\lambda \in Q}\left\{g_\delta(\bar{\lambda})^T\lambda\right\}$, and the final equality follows since $\hat{\lambda} \in \arg\max\limits_{\lambda \in Q}\left\{\nabla h(\bar{\lambda})^T\lambda\right\}$.  Rearranging terms then yields the result.
\end{proof}

Now consider a version of the Frank-Wolfe method where the computation of $\nabla h(\lambda_k)$ at step (1.) is replaced with the computation of $g_{\delta_k}(\lambda_k)$.  Then Proposition \ref{dProp} implies that such a version can be viewed simply as a special case of the version of the Frank-Wolfe method with approximate subproblem solutions (Method \ref{fw-approx}) of Section \ref{approx-subprob} with $\delta_k$ replaced by $2\delta_k$.  Thus, we may readily apply Theorems \ref{fwapprox-complexity} and \ref{gapapprox-complexity} and Proposition \ref{prestart-property-approx} to this case. In particular, similar to the results in \cite{daspremontapprox} regarding error non-accumulation for an accelerated first-order method, the results herein imply that there is no accumulation of errors for a version of the Frank-Wolfe method that computes approximate gradients with a $\delta$-oracle at each iteration.  Furthermore, it is a simple extension to consider a version of the Frank-Wolfe method that computes both (i) approximate gradients with a $\delta$-oracle, and (ii) approximate linear optimization subproblem solutions.

\subsubsection{Inexact Gradient Computation Model via the $(\delta,L)$-oracle}

The premise \eqref{dCondition} underlying the $\delta$-oracle is quite strong and can be restrictive in many cases.  For this reason among others, Devolder et al. \cite{devolderApprox} introduce the less restrictive $(\delta, L)$-oracle model.  For scalars $\delta, L \geq 0$, the $(\delta, L)$-oracle is defined as a (possibly non-unique) mapping $Q \to \mathbb{R} \times E^\ast$ that maps $\bar{\lambda} \to (h_{(\delta,L)}(\bar{\lambda}), g_{(\delta, L)}(\bar{\lambda}))$ which satisfy:
\begin{align}
h(\lambda) &\le h_{(\delta,L)}(\bar \lambda) + g_{(\delta,L)}(\bar \lambda)^T(\lambda - \bar \lambda) \ , \ \text{ and }\label{dgnCondition}\\
h(\lambda) &\ge h_{(\delta,L)}(\bar \lambda) + g_{(\delta,L)}(\bar \lambda)^T(\lambda - \bar \lambda) - \frac{L}{2}\|\lambda - \bar \lambda\|^2 - \delta \ \ \text{ for all } \lambda, \bar{\lambda} \in Q \label{dgnCondition2}\ ,
\end{align}
where $\|\cdot\|$ is a choice of norm on $E$.  Note that in contrast to the $\delta$-oracle model, the $(\delta, L)$-oracle model does assume that the function $h(\cdot)$ is smooth or even concave -- it simply assumes that there is an oracle returning the pair $(h_{(\delta,L)}(\bar{\lambda}), g_{(\delta, L)}(\bar{\lambda}))$ satisfying \eqref{dgnCondition} and \eqref{dgnCondition2}.\medskip

In Method \ref{fw-dgn} we present a version of the Frank-Wolfe method that utilizes the $(\delta, L)$-oracle.  Note that we allow the parameters $\delta$ and $L$ of the $(\delta, L)$-oracle to be a function of the iteration index $k$.  Inequality \eqref{dgnCondition} in the definition of the $(\delta, L)$-oracle immediately implies that $B^w_k \geq h^\ast$.  We now state the main technical complexity bound for Method \ref{fw-dgn}, in terms of the sequence of bound gaps $\{B_k - h(\lambda_{k+1})\}$.  Recall from Section \ref{sectfw} the definition $\mathrm{Diam}_Q := \max\limits_{\lambda, \bar \lambda \in Q} \{\|\lambda - \bar \lambda\|\}$, where the norm $\|\cdot\|$ is the norm used in the definition of the $(\delta, L)$-oracle \eqref{dgnCondition2}.\medskip

\begin{algorithm}
\caption{Frank-Wolfe Method With $(\delta, L)$-Oracle}\label{fw-dgn}
\begin{algorithmic}
\STATE Initialize at $\lambda_1 \in Q$, (optional) initial upper bound $B_0$, $k \gets 1$ .

At iteration $k$:
\STATE 1. Compute $h_k \gets h_{(\delta_k,L_k)}(\lambda_k), \ g_k \gets g_{(\delta_k, L_k)}(\lambda_k)$ .
\STATE 2. Compute $\tilde \lambda_k \gets \arg\max\limits_{\lambda \in Q}\{h_k + g_k^T(\lambda - \lambda_k)\}$ .\\
\ \ \ \ \ \ \ \ \ \ $B^w_k \leftarrow h_k + g_k^T(\tilde \lambda_k - \lambda_k)$ .\\
\STATE 3. (Optional: compute other upper bound $B^o_k$), update best bound $B_k \leftarrow \min\{B_{k-1}, B^w_k, B^o_k\}$ .
\STATE 4. Set $\lambda_{k+1} \gets \lambda_k + \bar{\alpha}_k(\tilde \lambda_k - \lambda_k)$, where $\bar{\alpha}_k \in [0,1)$ .
\end{algorithmic}
\end{algorithm}\medskip

\begin{theorem}\label{dgn-complexity}
Consider the iterate sequences of the Frank-Wolfe method with the $(\delta, L)$-oracle (Method \ref{fw-dgn}) $\{\lambda_k\}$ and $\{\tilde \lambda_k\}$ and the sequence of upper bounds $\{B_k\}$ on $h^*$, using the step-size sequence $\{\bar\alpha_k\}$.  For the auxiliary sequences $\{\alpha_k\}$ and $\{\beta_k\}$ given by (\ref{dos}), and for any $k \geq 0$, the following inequality holds:
\begin{equation}\label{fwdgn-ineq1}
B_k - h(\lambda_{k+1}) \leq \frac{B_k - h(\lambda_1)}{\beta_{k+1}} + \frac{\frac{1}{2}\mathrm{Diam}_Q^2\sum_{i = 1}^kL_i\frac{\alpha_i^2}{\beta_{i+1}}}{\beta_{k+1}} + \frac{\sum_{i = 1}^k\beta_{i+1}\delta_i}{\beta_{k+1}} \ .
\end{equation}\qed
\end{theorem}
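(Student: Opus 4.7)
The plan is to follow the structure of the proof of Theorem \ref{fw-complexity}, but replace the use of the curvature constant inequality \eqref{clarkson} with the lower-bound oracle condition \eqref{dgnCondition2}, and track the additional error terms that arise from the inexactness of the oracle.

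First, I would apply \eqref{dgnCondition2} with $\lambda = \lambda_{i+1}$ and $\bar{\lambda} = \lambda_i$. Since $\lambda_{i+1} - \lambda_i = \bar{\alpha}_i(\tilde{\lambda}_i - \lambda_i)$ and $\|\tilde{\lambda}_i - \lambda_i\| \leq \mathrm{Diam}_Q$, this yields
\begin{equation*}
h(\lambda_{i+1}) \geq h_i + \bar{\alpha}_i g_i^T(\tilde{\lambda}_i - \lambda_i) - \tfrac{L_i}{2}\bar{\alpha}_i^2 \mathrm{Diam}_Q^2 - \delta_i,
\end{equation*}
where $h_i := h_{(\delta_i,L_i)}(\lambda_i)$ and $g_i := g_{(\delta_i, L_i)}(\lambda_i)$. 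Multiplying through by $\beta_{i+1}$ and invoking the dual averages identities \eqref{identity1} (namely $\beta_{i+1}\bar\alpha_i = \alpha_i$, $\beta_{i+1}\bar\alpha_i^2 = \alpha_i^2/\beta_{i+1}$, and $\beta_{i+1} - \alpha_i = \beta_i$), together with the identity $B^w_i = h_i + g_i^T(\tilde\lambda_i - \lambda_i)$ from step (2.) of Method \ref{fw-dgn}, I would obtain
\begin{equation*}
\beta_{i+1} h(\lambda_{i+1}) \geq \beta_i h_i + \alpha_i B^w_i - \tfrac{L_i}{2}\tfrac{\alpha_i^2}{\beta_{i+1}}\mathrm{Diam}_Q^2 - \beta_{i+1}\delta_i.
\end{equation*}
At this point the critical observation is that \eqref{dgnCondition} specialized to $\lambda = \bar\lambda = \lambda_i$ gives $h_i \geq h(\lambda_i)$, which allows replacing $\beta_i h_i$ by $\beta_i h(\lambda_i)$ on the right-hand side to produce the telescoping relation
\begin{equation*}
\beta_{i+1} h(\lambda_{i+1}) - \beta_i h(\lambda_i) \geq \alpha_i B^w_i - \tfrac{L_i}{2}\tfrac{\alpha_i^2}{\beta_{i+1}}\mathrm{Diam}_Q^2 - \beta_{i+1}\delta_i.
\end{equation*}

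The remainder of the argument then mimics the end of the proof of Theorem \ref{fw-complexity}: summing from $i = 1$ to $k$ collapses the left side to $\beta_{k+1}h(\lambda_{k+1}) - h(\lambda_1)$ (since $\beta_1 = 1$), and adding an arbitrary scalar $B$ to both sides and invoking identity \eqref{identity2} plus the bound $B_k \leq B^w_i$ for all $i \leq k$ gives
\begin{equation*}
\min\{B,B_k\}\beta_{k+1} \leq B + \beta_{k+1}h(\lambda_{k+1}) - h(\lambda_1) + \tfrac{1}{2}\mathrm{Diam}_Q^2\sum_{i=1}^k L_i\tfrac{\alpha_i^2}{\beta_{i+1}} + \sum_{i=1}^k \beta_{i+1}\delta_i.
\end{equation*}
Setting $B = B_k$ and dividing by $\beta_{k+1}$ yields the desired inequality \eqref{fwdgn-ineq1}.

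There is no serious obstacle here since the skeleton is already in place from Theorem \ref{fw-complexity}; the main subtlety is bookkeeping. Specifically, one must be careful that the $\delta_i$ term enters as $\beta_{i+1}\delta_i$ (rather than $\alpha_i \delta_i$ as in the approximate-subproblem case), which reflects the fact that the $(\delta,L)$-oracle error \emph{accumulates} across iterations and, after division by $\beta_{k+1}$, does not simply average out. Also, it must be checked that replacing $\|\tilde\lambda_i - \lambda_i\|^2$ by $\mathrm{Diam}_Q^2$ is the only place the geometry of $Q$ enters, so the norm in the definition of the $(\delta,L)$-oracle and the one defining $\mathrm{Diam}_Q$ agree throughout the bound.
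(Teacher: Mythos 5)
Your proposal is correct and matches the paper's own proof essentially step for step: apply \eqref{dgnCondition2} along the step direction with $\|\tilde\lambda_i-\lambda_i\|\le\mathrm{Diam}_Q$, multiply by $\beta_{i+1}$, use the identities \eqref{identity1}--\eqref{identity2} together with $h_i\ge h(\lambda_i)$ (which the paper packages as the left inequality of \eqref{liberty}) and the definition of $B^w_i$, then telescope and divide by $\beta_{k+1}$. Your closing remarks on why the error enters as $\beta_{i+1}\delta_i$ and on the norm consistency are also accurate.
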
\medskip

As with Theorem \ref{fwapprox-complexity}, observe that the terms on the right-hand side of \eqref{fwdgn-ineq1} are composed of the exact terms which appear on the right-hand side of \eqref{fw-ineq1}, plus an additional term that is a function of $\delta_1, \ldots, \delta_k$. Unfortunately, Theorem \ref{dgn-complexity} implies an accumulation of errors for Method \ref{fw-dgn} under essentially any choice of step-size sequence $\{\bar{\alpha}_k\}$. Indeed, suppose that $\beta_i = O(i^\gamma)$ for some $\gamma \geq 0$, then $\sum_{i = 1}^k\beta_{i+1} = O(k^{\gamma + 1})$, and in the constant case where $\delta_i := \delta$, we have $\frac{\sum_{i = 1}^k\beta_{i+1}\delta_i}{\beta_{k+1}} = O(k\delta)$.  Therefore in order to achieve an $O\left(\frac{1}{k}\right)$ rate of convergence (for example with the step-size sequence \eqref{nemirovsteps}) we need $\delta = O\left(\frac{1}{k^2}\right)$. This negative result nevertheless contributes to the understanding of the merits and demerits of different first-order methods as follows.  Note that in \cite{devolderApprox} it is shown that the ``classical'' gradient methods (both primal and dual), which require solving a proximal projection problem at each iteration, achieve an $O\left(\frac{1}{k} + \delta\right)$ accuracy under the $(\delta, L)$-oracle model for  constant $(\delta, L)$.  On the other hand, it is also shown in \cite{devolderApprox} that all accelerated first-order methods (which also solve proximal projection problems at each iteration) generically achieve an $O\left(\frac{1}{k^2}+ k\delta\right)$ accuracy and thus suffer from an accumulation of errors under the $(\delta, L)$-oracle model.  As discussed in the Introduction herein, the Frank-Wolfe method offers two possible advantages over these proximal methods: (i) the possibility that solving the linear optimization subproblem is easier than the projection-type problem in an iteration of a proximal method, and/or (ii) the possibility of greater structure (sparsity, low rank) of the iterates.  In Figure \ref{fomTable} we summarize the cogent properties of these three methods (or classes of methods) under exact gradient computation as well as with the $(\delta, L)$-oracle model.  As can be seen from the table in Figure \ref{fomTable}, no single method dominates in the three categories of properties shown in the table; thus there are inherent tradeoffs among these methods/classes.\medskip

\begin{figure}
\centering
\begin{tabular}{|c|c||c|c|c|}
\hline
Method/ & Type of & Accuracy with & Accuracy with  &  Special Structure \\
Class & Subproblem & Exact Gradients & $(\delta, L)$-oracle   & of Iterates\\ \hline \hline
Frank-Wolfe & Linear Optimization & $O\left(1/k \right)$ & $O\left(1/k \right) + O(\delta k)$  & Yes \\ \hline
Classical Gradient & Prox Projection & $O\left(1/k \right)$ & $O(1/k ) + O(\delta )$  & No \\ \hline
Accelerated Gradient & Prox Projection & $O\left(1/k^2 \right)$ & $O\left(1/k^2 \right) + O(\delta k)$  & No \\ \hline
\end{tabular}
\caption{Properties of three (classes of) first-order methods after $k$ iterations.}\label{fomTable}
\end{figure}

\noindent {\bf Proof of Theorem \ref{dgn-complexity}:}
Note that \eqref{dgnCondition} and \eqref{dgnCondition2} with $\bar{\lambda} = \lambda$ imply that:
\begin{equation}\label{liberty}
h(\lambda) \leq h_{(\delta,L)}(\lambda) \leq h(\lambda) + \delta \ \ \text{ for all } \lambda \in Q \ .
\end{equation}
Recall properties \eqref{identity1} and \eqref{identity2} of the dual averages sequences \eqref{dos}. Following the proof of Theorem \ref{fw-complexity}, we then have for $i \ge 1$:
\begin{align*}
\beta_{i+1} h(\lambda_{i+1}) & \ge \beta_{i+1} \left[h_i+ g_i^T(\tilde\lambda_i-\lambda_i)\bar\alpha_i - \frac{1}{2}\bar\alpha_i^2L_i\mathrm{Diam}_Q^2 - \delta_i \right] \\
& = \beta_{i} h_i + (\beta_{i+1} - \beta_{i})h_i +  \beta_{i+1}\bar\alpha_ig_i^T(\tilde\lambda_i-\lambda_i) - \frac{1}{2}\beta_{i+1}\bar\alpha_i^2L_i\mathrm{Diam}_Q^2 - \beta_{i+1}\delta_i\\
& = \beta_{i} h_i + \alpha_{i}\left[h_i +  g_i^T(\tilde\lambda_i-\lambda_i)\right] - \frac{1}{2}\frac{\alpha_i^2}{\beta_{i+1}}L_i\mathrm{Diam}_Q^2  - \beta_{i+1}\delta_i\\
& \geq \beta_{i} h(\lambda_i) + \alpha_{i}B^w_i - \frac{1}{2}\frac{\alpha_i^2}{\beta_{i+1}}L_i\mathrm{Diam}_Q^2 - \beta_{i+1}\delta_i \ ,
\end{align*}
where the first inequality uses \eqref{dgnCondition2}, and the second inequality uses \eqref{liberty} and the definition of the Wolfe upper bound in Method \ref{fw-dgn}. The rest of the proof follows as in the proof of Theorem \ref{fw-complexity}. \qed

\section{Summary/Conclusions}\label{conclusions}
The Frank-Wolfe method is the subject of substantial renewed interest due to the relevance of applications (e.g., regularized regression, boosting/classification, matrix completion, image construction, other machine learning problems), the need in many applications for only moderately high accuracy solutions, the applicability of the method on truly large-scale problems, and the appeal of structural implications (sparsity, low-rank) induced by the method itself.  The method requires (at each iteration) the solution of a linear optimization subproblem over the feasible region of interest, in contrast to most other first-order methods which require (at each iteration) the solution of a certain projection subproblem over the feasible region defined by a strongly convex prox function.  As such,  the Frank-Wolfe method is particularly efficient in various important application settings including matrix completion.\medskip

In this paper we have developed new analysis and results for the Frank-Wolfe method.  Virtually all of our results are consequences and applications of Theorems \ref{fw-complexity} and \ref{gap-complexity}, which present computational guarantees for optimality gaps (Theorem \ref{fw-complexity}) and the ``FW gaps'' (Theorem \ref{gap-complexity}) for arbitrary step-size sequences $\{\bar\alpha_k\}$ of the Frank-Wolfe method.   These technical theorems are applied to yield computational guarantees for the well-studied step-size rule $\bar\alpha_k := \frac{2}{k+2}$ (Section \ref{vivek}), simple averaging (Section \ref{pointer}), and constant step-size rules (Section \ref{brick}).  The second set of contributions in the paper concern ``warm start'' step-size rules and computational guarantees that reflect the quality of the given initial iterate (Section \ref{sect-warmstart}) as well as the accumulated information about the optimality gap and the curvature constant over a sequence of iterations (Section \ref{voip}).  The third set of contributions concerns computational guarantees in the presence of an approximate solution of the linear optimization subproblem (Section \ref{approx-subprob}) and approximate computation of gradients (Section \ref{subsect-aspremont}).\medskip

We end with the following observation:  that the well-studied step-size rule $\bar\alpha_k := \frac{2}{k+2}$ does not require any estimation of the curvature constant $C_{h,Q}$ (which is generically not known).  Therefore this rule is in essence fully-automatically scaled as regards the curvature $C_{h,Q}$.  In contrast, the dynamic warm-start step-size rule \eqref{Aa}, which incorporates accumulated information over a sequence of iterates, requires updating estimates of the curvature constant $C_{h,Q}$ that satisfy certain conditions.  It is an open challenge to develop a dynamic warm-start step-size strategy that is automatically scaled and so does not require computing or updating estimates of $C_{h,Q}$.

\bibliographystyle{amsplain}
\bibliography{GF-papers-nips}

\appendix
\section{Appendix}\label{proofs}

\begin{proposition}\label{twobounds}
Let $B_k^w$ and $B_k^m$ be as defined in Section \ref{sectfw}. Suppose that there exists an open set $\hat{Q} \subseteq E$ containing $Q$ such that $\phi(x,\cdot)$ is differentiable on $\hat{Q}$ for each fixed $x \in P$, and that $h(\cdot)$ has the minmax structure \eqref{minmax_structure} on $\hat Q$ and is differentiable on $\hat Q$. Then it holds that:
\begin{equation*}
B_k^w \geq B_k^m \geq h^* \ .
\end{equation*} Furthermore, it holds that $B_k^w = B_k^m$ in the case when $\phi(x,\cdot)$ is linear in the variable $\lambda$.
\end{proposition}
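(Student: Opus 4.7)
The plan is to prove $B_k^m \ge h^\ast$ by weak duality, and then prove $B_k^w \ge B_k^m$ by invoking an envelope-type identity $\nabla h(\lambda_k) = \nabla_\lambda \phi(x_k,\lambda_k)$ together with concavity of $\phi(x_k,\cdot)$.

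First I would handle $B_k^m \ge h^\ast$: for any $x \in P$ and any $\lambda \in Q$, we have $h(\lambda) = \min_{x' \in P}\phi(x',\lambda) \le \phi(x,\lambda) \le \max_{\lambda' \in Q}\phi(x,\lambda') = f(x)$, so taking the max over $\lambda$ yields $h^\ast \le f(x)$, and specializing to $x = x_k$ gives $B_k^m = f(x_k) \ge h^\ast$.

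Next I would establish the envelope identity $\nabla h(\lambda_k) = \nabla_\lambda \phi(x_k,\lambda_k)$. Since $x_k \in \arg\min_{x \in P}\phi(x,\lambda_k)$, we have $h(\lambda_k) = \phi(x_k,\lambda_k)$, while for every $\lambda \in \hat Q$ the inequality $h(\lambda) \le \phi(x_k,\lambda)$ holds by definition of $h$. Thus the function $\psi(\lambda) := \phi(x_k,\lambda) - h(\lambda)$ is nonnegative on $\hat Q$ and equals $0$ at the interior-like point $\lambda_k$ (we use that $\hat Q$ is open and contains $Q$ so that $\lambda_k$ is interior to the common domain). Since $\psi$ is differentiable on $\hat Q$ by hypothesis and attains a minimum at $\lambda_k$, we must have $\nabla \psi(\lambda_k) = 0$, i.e., $\nabla h(\lambda_k) = \nabla_\lambda \phi(x_k,\lambda_k)$.

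Combining this with the definition of $B_k^w$ and $\tilde\lambda_k$ yields
\begin{equation*}
B_k^w = \max_{\lambda \in Q}\bigl\{\phi(x_k,\lambda_k) + \nabla_\lambda\phi(x_k,\lambda_k)^T(\lambda - \lambda_k)\bigr\} \ .
\end{equation*}
Because $\phi(x_k,\cdot)$ is concave and differentiable on $\hat Q$, its first-order (tangent) approximation at $\lambda_k$ dominates it pointwise on $\hat Q \supseteq Q$, so for every $\lambda \in Q$,
\begin{equation*}
\phi(x_k,\lambda_k) + \nabla_\lambda\phi(x_k,\lambda_k)^T(\lambda - \lambda_k) \ge \phi(x_k,\lambda) \ .
\end{equation*}
Taking the maximum over $\lambda \in Q$ on both sides gives $B_k^w \ge \max_{\lambda \in Q}\phi(x_k,\lambda) = f(x_k) = B_k^m$, completing the main chain $B_k^w \ge B_k^m \ge h^\ast$. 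For the final claim, if $\phi(x,\cdot)$ is linear in $\lambda$ then its first-order approximation at $\lambda_k$ equals $\phi(x_k,\cdot)$ identically on $E$, so the displayed inequality becomes equality for every $\lambda$, and taking maxima yields $B_k^w = B_k^m$.

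The main obstacle is the envelope identity step; the argument via $\psi \ge 0$ with $\psi(\lambda_k) = 0$ is elegant and sidesteps the usual Danskin-type regularity requirements on the minimizer set, since the hypothesis directly supplies the differentiability of both $h$ and $\phi(x_k,\cdot)$ on the open set $\hat Q$ containing $\lambda_k$.
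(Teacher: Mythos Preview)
Your proof is correct and follows essentially the same architecture as the paper's: weak duality for $B_k^m \ge h^\ast$, the envelope identity $\nabla h(\lambda_k)=\nabla_\lambda\phi(x_k,\lambda_k)$, and then concavity of $\phi(x_k,\cdot)$ to bound $B_k^m$ by $B_k^w$. The only cosmetic difference is in how the envelope identity is obtained: the paper shows $\nabla_\lambda\phi(x_k,\lambda_k)$ is a supergradient of the differentiable concave function $h$ at $\lambda_k$ (hence must equal $\nabla h(\lambda_k)$), whereas you reach the same conclusion via the first-order condition at the interior minimum of $\psi=\phi(x_k,\cdot)-h$.
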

\begin{proof}
It is simple to show that $B_k^m \geq h^*$.  At the current iterate $\lambda_k \in Q$, define $x_k \in \arg\min\limits_{x \in P}\phi(x, \lambda_k)$.  Then from the definition of $h(\lambda)$ and the concavity of $\phi(x_k, \cdot)$ we have:
\begin{equation}\label{sandman}
h(\lambda) \le \phi(x_k, \lambda) \leq \phi(x_k, \lambda_k) + \nabla_\lambda\phi(x_k, \lambda_k)^T(\lambda - \lambda_k) = h(\lambda_k) + \nabla_\lambda\phi(x_k, \lambda_k)^T(\lambda - \lambda_k) \ ,
\end{equation}whereby $\nabla_\lambda\phi(x_k, \lambda_k)$ is a subgradient of $h(\cdot)$ at $\lambda_k$.  It then follows from the differentiability of $h(\cdot)$ that $\nabla h(\lambda_k) = \nabla_\lambda\phi(x_k, \lambda_k)$, and this implies from \eqref{sandman} that:
\begin{equation}\label{sandman2}
\phi(x_k, \lambda) \leq h(\lambda_k) + \nabla h(\lambda_k)^T(\lambda - \lambda_k) \ .
\end{equation}
Therefore we have:
\begin{equation*}
B_k^m = f(x_k) = \max_{\lambda \in Q}\{\phi(x_k, \lambda)\} \leq \max_{\lambda \in Q}\{h(\lambda_k) + \nabla h(\lambda_k)^T(\lambda - \lambda_k)\} = B_k^w \ .
\end{equation*}If $\phi(x,\lambda)$ is linear in $\lambda$, then the second inequality in \eqref{sandman} is an equality, as is \eqref{sandman2}.
\end{proof}

\begin{proposition}\label{barL}
Let $C_{h, Q}$, $\mathrm{Diam}_Q$, and $L_{h,Q}$ be as defined in Section \ref{sectfw}.  Then it holds that  $C_{h, Q} \le L_{h,Q}(\mathrm{Diam}_Q)^2 $.
\end{proposition}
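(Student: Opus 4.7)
The plan is to verify that the constant $\hat C := L_{h,Q}(\mathrm{Diam}_Q)^2$ itself satisfies the defining inequality \eqref{clarkson}, which forces $C_{h,Q} \le \hat C$ since $C_{h,Q}$ is the minimal such constant. The heart of the matter is the standard ``descent lemma'' style one-sided quadratic bound for functions with Lipschitz gradient, applied along the segment from $\lambda$ to $\lambda + \alpha(\bar\lambda - \lambda)$.

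First I would establish the generic bound
\[
h(y) \ge h(x) + \nabla h(x)^T(y - x) - \tfrac{L_{h,Q}}{2}\|y - x\|^2 \quad \text{for all } x,y \in Q .
\]
Because $Q$ is convex, the segment $\{x + t(y-x) : t \in [0,1]\}$ lies in $Q$, so by the fundamental theorem of calculus
\[
h(y) - h(x) - \nabla h(x)^T(y-x) = \int_0^1 \bigl(\nabla h(x + t(y-x)) - \nabla h(x)\bigr)^T(y - x)\, dt .
\]
Applying the dual-norm inequality $s^T v \le \|s\|_\ast \|v\|$ inside the integral and then the Lipschitz property of $\nabla h$ on $Q$ gives
\[
\left| h(y) - h(x) - \nabla h(x)^T(y-x) \right| \le \int_0^1 L_{h,Q}\, t\, \|y - x\|^2\, dt = \tfrac{L_{h,Q}}{2}\|y - x\|^2,
\]
which delivers the desired one-sided inequality.

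Next I would specialize this by setting $x = \lambda$ and $y = \lambda + \alpha(\bar\lambda - \lambda)$ for arbitrary $\lambda,\bar\lambda \in Q$ and $\alpha \in [0,1]$; note that $y \in Q$ by convexity. Then $y - x = \alpha(\bar\lambda - \lambda)$, so $\|y-x\|^2 = \alpha^2 \|\bar\lambda - \lambda\|^2 \le \alpha^2 (\mathrm{Diam}_Q)^2$. Substituting, we obtain
\[
h(\lambda + \alpha(\bar\lambda - \lambda)) \ge h(\lambda) + \nabla h(\lambda)^T(\alpha(\bar\lambda - \lambda)) - \tfrac{1}{2}\bigl(L_{h,Q}(\mathrm{Diam}_Q)^2\bigr)\alpha^2,
\]
which is precisely \eqref{clarkson} with $C = L_{h,Q}(\mathrm{Diam}_Q)^2$. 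By minimality of $C_{h,Q}$ we conclude $C_{h,Q} \le L_{h,Q}(\mathrm{Diam}_Q)^2$.

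There is no real obstacle here; the only mild subtlety is that $\nabla h$ is only assumed Lipschitz on $Q$, so one must invoke convexity of $Q$ to ensure that the integration path $x + t(y-x)$ remains in $Q$ before applying the Lipschitz bound. Everything else is routine.
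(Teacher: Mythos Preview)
Your proof is correct and follows essentially the same approach as the paper: both use the fundamental theorem of calculus along the segment, the dual-norm inequality, and the Lipschitz bound on $\nabla h$ (invoking convexity of $Q$ to keep the path inside $Q$), then replace $\|\bar\lambda-\lambda\|^2$ by $(\mathrm{Diam}_Q)^2$. The only cosmetic difference is that you first isolate the generic descent-lemma inequality for arbitrary $x,y\in Q$ and then specialize, whereas the paper applies the same integral argument directly to the point $\lambda+\alpha(\bar\lambda-\lambda)$.
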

\begin{proof}
Since $Q$ is convex, we have $\lambda + \alpha (\tilde \lambda - \lambda) \in Q$ for all $\lambda, \tilde{\lambda} \in Q$ and for all $\alpha \in [0,1]$. Since the gradient of $h(\cdot)$ is Lipschitz, from the fundamental theorem of calculus we have:
\begin{align*}
h(\lambda + \alpha (\tilde \lambda - \lambda)) & = h(\lambda) + \nabla h(\lambda)^T(\alpha (\tilde \lambda - \lambda)) +\int\limits_0^1[\nabla h(\lambda + t \alpha (\tilde \lambda - \lambda)) - \nabla h (\lambda)]^T(\alpha(\tilde \lambda - \lambda)) dt \\
& \ge h(\lambda) + \nabla h(\lambda)^T(\alpha (\tilde \lambda - \lambda)) -\int\limits_0^1 \|\nabla h(\lambda + t \alpha (\tilde \lambda - \lambda)) - \nabla h (\lambda)\|_*(\alpha)\|\tilde \lambda - \lambda\| dt \\
& \ge h(\lambda) + \nabla h(\lambda)^T(\alpha (\tilde \lambda - \lambda)) -\int\limits_0^1 L_{h, Q} \|(t \alpha)(\tilde \lambda - \lambda)\|( \alpha)\|\tilde \lambda - \lambda\| dt \\
& = h(\lambda) + \nabla h(\lambda)^T(\alpha (\tilde \lambda - \lambda)) - \frac{\alpha^2}{2}L_{h, Q}\|(\tilde \lambda - \lambda)\|^2 \\
&\geq h(\lambda) + \nabla h(\lambda)^T(\alpha (\tilde \lambda - \lambda)) - \frac{\alpha^2}{2}L_{h, Q}(\mathrm{Diam}_Q)^2 \ ,
\end{align*} whereby it follows that $C_{h, Q} \le L_{h,Q}(\mathrm{Diam}_Q)^2 $.
\end{proof}

\begin{proposition}\label{boringbound1} For $k\ge 0$ the following inequality holds:
\begin{equation*}
\sum_{i=0}^k \frac{i+1}{i+2} \le \frac{(k+1)(k+2)}{k+4} \ .
\end{equation*}
\end{proposition}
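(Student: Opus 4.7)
The plan is to proceed by induction on $k$, since the inequality is a clean polynomial/rational identity once one clears denominators, and the base case gives equality.

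For the base case $k=0$, both sides equal $\tfrac{1}{2}$, so the inequality holds with equality. For the inductive step, assuming $\sum_{i=0}^k\frac{i+1}{i+2}\le \frac{(k+1)(k+2)}{k+4}$, I would add the next term $\frac{k+2}{k+3}$ to both sides, so that it suffices to establish the one-step inequality
\begin{equation*}
\frac{(k+1)(k+2)}{k+4} + \frac{k+2}{k+3} \;\le\; \frac{(k+2)(k+3)}{k+5}.
\end{equation*}
Dividing through by the common factor $k+2$ reduces this to
\begin{equation*}
\frac{k+1}{k+4} + \frac{1}{k+3} \;\le\; \frac{k+3}{k+5},
\end{equation*}
which is a purely algebraic claim.

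To verify this reduced inequality I would put the left side over the common denominator $(k+3)(k+4)$, obtaining $\frac{k^2+5k+7}{(k+3)(k+4)}$, then cross-multiply against $\frac{k+3}{k+5}$ (all quantities positive for $k\ge 0$) to reduce to the polynomial inequality
\begin{equation*}
(k^2+5k+7)(k+5) \;\le\; (k+3)^2(k+4).
\end{equation*}
Expanding both sides yields $k^3+10k^2+32k+35$ on the left and $k^3+10k^2+33k+36$ on the right, so the difference is $k+1 \ge 1 > 0$ for all $k\ge 0$, which closes the induction.

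There is no real obstacle here: the only thing to watch out for is keeping track of the bookkeeping in the expansion of $(k^2+5k+7)(k+5)$ versus $(k+3)^2(k+4)$, since any arithmetic slip would destroy the comparison. Once the difference $k+1$ is identified, the inequality is immediate. An alternative (but equivalent) route would be to write $\frac{i+1}{i+2}=1-\frac{1}{i+2}$ and try to bound the resulting harmonic-type tail, but this seems messier than the direct induction and I would not pursue it.
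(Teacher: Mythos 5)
Your proof is correct and follows essentially the same route as the paper's: induction on $k$ with equality at $k=0$, reduction of the inductive step to the polynomial comparison $(k^2+5k+7)(k+5) \le k^3+10k^2+33k+36$ (the paper writes the right-hand side as $(k^2+7k+12)(k+3)$, which is the same cubic as your $(k+3)^2(k+4)$). The arithmetic checks out, so nothing further is needed.
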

\begin{proof}The inequality above holds at equality for $k=0$.  By induction, suppose the inequality is true for some given $k\ge 0$, then
\begin{equation}\begin{array}{rl}\label{boring1a}
\sum_{i=0}^{k+1} \frac{i+1}{i+2} & = \sum_{i=0}^{k} \frac{i+1}{i+2} + \frac{k+2}{k+3} \\ \\
& \le \frac{(k+1)(k+2)}{k+4} + \frac{k+2}{k+3} \\ \\
& = (k+2)\left[\frac{k^2+5k+7}{k^2 + 7k +12}\right] \ .
\end{array}\end{equation}Now notice that $$(k^2 +5k+7)(k+5) = k^3 + 10k^2 +32k + 35 < k^3 + 10k^2 +33k + 36 =  (k^2 + 7k +12)(k+3) \ , $$ which combined with \eqref{boring1a} completes the induction.
\end{proof}

\begin{proposition}\label{boringbound2}For $k\ge 1$ let $\bar\alpha:= 1-\frac{1}{\sqrt[k]{k+1}}$.  Then the following inequalities holds:
\begin{itemize}
\item[(i)] $\displaystyle\frac{\ln(k+1)}{k} \ge \bar\alpha $ , and
\item[(ii)] $(k+1)\bar\alpha \ge 1 $ .
\end{itemize}
\end{proposition}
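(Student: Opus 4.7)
The plan is to reduce both inequalities to standard one-variable convexity/monotonicity facts about $e^x$ and $\ln$, rather than doing any delicate induction on $k$.

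For part (i), I would first rewrite the target $\frac{\ln(k+1)}{k} \ge \bar\alpha = 1 - (k+1)^{-1/k}$ as
\[
(k+1)^{-1/k} \ge 1 - \frac{\ln(k+1)}{k}.
\]
Setting $x := \frac{\ln(k+1)}{k}$, the left side is exactly $e^{-x}$, so the inequality becomes $e^{-x} \ge 1 - x$, which is the familiar convexity inequality for the exponential, valid for all real $x$. That finishes (i) in a couple of lines.

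For part (ii), I would rearrange $(k+1)\bar\alpha \ge 1$ to the equivalent algebraic form
\[
(k+1)^{\,1 - 1/k} \le k \quad\Longleftrightarrow\quad (k+1)^{k-1} \le k^{k},
\]
and then take logarithms to reduce it to $(k-1)\ln\!\bigl(1 + \tfrac{1}{k}\bigr) \le \ln k$. Applying the standard bound $\ln(1+t) \le t$ with $t = 1/k$ dominates the left-hand side by $(k-1)/k = 1 - 1/k$, so it suffices to show $1 - \tfrac{1}{k} \le \ln k$ for $k \ge 1$. I would verify this last inequality by a one-line calculus check: the function $f(k) := \ln k - 1 + 1/k$ satisfies $f(1) = 0$ and $f'(k) = \tfrac{k-1}{k^2} \ge 0$ for $k \ge 1$, so $f(k) \ge 0$ on $[1,\infty)$.

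There is no real obstacle here; both parts are short once the substitution in (i) and the algebraic rearrangement in (ii) are made. The only mildly delicate point is checking that the boundary case $k=1$ in part (ii) works (it gives $(2)^0 = 1 \le 1^1 = 1$, which is tight), and ensuring that $\ln k \ge 1 - 1/k$ holds with equality at $k=1$ so that the whole chain is sharp there. Everything else is routine estimation.
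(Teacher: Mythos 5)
Your proposal is correct. Part (i) is exactly the paper's argument: the paper phrases it as the gradient inequality $t \ge 1 - e^{-t}$ for the concave function $1-e^{-t}$ at $t=0$, evaluated at $t = \frac{\ln(k+1)}{k}$, which is the same as your $e^{-x} \ge 1-x$ with $x = \frac{\ln(k+1)}{k}$. For part (ii) you take a somewhat different route to the same intermediate target. The paper also reduces (ii) to showing $(k+1)^{1/k} \ge \frac{k+1}{k}$ (equivalent to your $(k+1)^{1-1/k} \le k$), but proves it by applying $e^t \ge 1+t$ at $t = \frac{\ln(k+1)}{k}$ to get $(k+1)^{1/k} \ge 1 + \frac{\ln(k+1)}{k}$ and then invoking $\ln(k+1) \ge \ln(e) = 1$; that last step forces $k \ge 2$, so the paper must check $k=1$ separately. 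Your version --- raise to the $k$-th power, take logs, and use $\ln(1+\tfrac{1}{k}) \le \tfrac{1}{k}$ together with $\ln k \ge 1 - \tfrac{1}{k}$ --- covers all $k \ge 1$ in one uniform chain (at $k=1$ every inequality in your chain is $0 \le 0$, so the separate boundary check you mention is not actually needed). The two arguments rest on the same underlying fact $e^x \ge 1+x$, but yours trades the paper's case split for one extra elementary log inequality, which is a mild tidiness gain.
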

\begin{proof}  To prove {\em (i)}, define $f(t):= 1-e^{-t}$, and noting that $f(\cdot)$ is a concave function, the gradient inequality for $f(\cdot)$ at $t=0$ is
$$ t \ge 1-e^{-t} \ . $$
Substituting $t=\frac{\ln(k+1)}{k} $ yields $$ \frac{\ln(k+1)}{k} = t \ge 1-e^{-t} = 1 - e^{-\frac{\ln(k+1)}{k}} = 1- \frac{1}{\sqrt[k]{k+1}} = \bar\alpha \ . $$
Note that {\em (ii)} holds for $k = 1$, so assume now that $k \geq 2$. To prove {\em (ii)} for $k \geq 2$, substitute $t = -\frac{\ln(k+1)}{k}$ into the gradient inequality above to obtain $-\frac{\ln(k+1)}{k} \geq 1 - (k+1)^{\frac{1}{k}}$ which can be rearranged to:
\begin{equation}\label{yeahyeah}
(k+1)^{\frac{1}{k}} \geq 1 + \frac{\ln(k+1)}{k} \geq 1 + \frac{\ln(e)}{k} = 1 + \frac{1}{k} = \frac{k+1}{k} \ .
\end{equation}
Inverting \eqref{yeahyeah} yields:
\begin{equation}\label{yeahyeahyeah}
(k+1)^{-\frac{1}{k}} \leq \frac{k}{k+1} = 1 - \frac{1}{k+1} \ .
\end{equation}
Finally, rearranging \eqref{yeahyeahyeah} and multiplying by $k+1$ yields {\em (ii)}.
\end{proof}

\begin{proposition}\label{boringbound3}
For any integers $\ell, k$ with $2 \leq \ell \leq k$, the following inequalities hold:
\begin{equation}\label{boringintegral1}
\ln\left(\frac{k+1}{\ell}\right) \leq \sum_{i = \ell}^k\frac{1}{i} \leq \ln\left(\frac{k}{\ell - 1}\right) \ ,
\end{equation}
and
\begin{equation}\label{boringintegral2}
\frac{k - \ell + 1}{(k+1)\ell} \leq \sum_{i = \ell}^k\frac{1}{i^2} \leq \frac{k - \ell + 1}{k(\ell-1)} \ ,
\end{equation}
\end{proposition}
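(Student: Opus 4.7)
The plan is to prove both double inequalities by the standard integral comparison test for decreasing functions. Since $f(x) = 1/x$ is positive and strictly decreasing on $(0, \infty)$, for each integer $i \geq 2$ we have the sandwiching
\begin{equation*}
\int_i^{i+1}\frac{dx}{x} \;\leq\; \frac{1}{i} \;\leq\; \int_{i-1}^{i}\frac{dx}{x} \ ,
\end{equation*}
and similarly for $f(x) = 1/x^2$ on the same range, with the same orientation.

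First I would sum the $1/x$ inequality over $i = \ell, \ldots, k$, which telescopes the integrals into $\int_\ell^{k+1} dx/x = \ln((k+1)/\ell)$ for the lower bound and $\int_{\ell-1}^{k} dx/x = \ln(k/(\ell-1))$ for the upper bound, yielding \eqref{boringintegral1} directly. The hypothesis $\ell \geq 2$ is used precisely to ensure $\ell - 1 \geq 1 > 0$ so that the upper integral is well-defined.

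Next I would repeat the argument verbatim for $1/x^2$, summing
\begin{equation*}
\int_i^{i+1}\frac{dx}{x^2} \;\leq\; \frac{1}{i^2} \;\leq\; \int_{i-1}^{i}\frac{dx}{x^2}
\end{equation*}
over $i = \ell, \ldots, k$. The lower telescoped integral equals $\frac{1}{\ell} - \frac{1}{k+1} = \frac{k - \ell + 1}{(k+1)\ell}$, and the upper one equals $\frac{1}{\ell-1} - \frac{1}{k} = \frac{k - \ell + 1}{k(\ell-1)}$, giving \eqref{boringintegral2}.

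There is no real obstacle; the entire proof is one-paragraph integral comparison. The only thing to check carefully is the indexing (whether the summation starts at $\ell$ or $\ell-1$ matches the integral bounds), and the role of the condition $\ell \geq 2$ in making the upper bounds finite. Both are handled by the shift in the integrand's interval relative to the index $i$.
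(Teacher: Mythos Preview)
Your proposal is correct and is essentially the same argument as the paper's own proof: the paper states the general integral-comparison bound $\int_\ell^{k+1} f(t)\,dt \le \sum_{i=\ell}^k f(i) \le \int_{\ell-1}^k f(t)\,dt$ for monotonically decreasing $f$ and then specializes to $f(t)=1/t$ and $f(t)=1/t^2$. Your write-up is in fact slightly more explicit than the paper's, spelling out the per-term sandwich and the evaluated integrals.
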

\begin{proof}
\eqref{boringintegral1} and \eqref{boringintegral2} are specific instances of the following more general fact: if $f(\cdot): [1, \infty) \to \mathbb{R}_+$ is a monotonically decreasing continuous function, then
\begin{equation}\label{boringintegral3}
\int_{\ell}^{k+1}f(t)dt \leq \sum_{i = \ell}^kf(i) \leq \int_{\ell - 1}^kf(t)dt \ .
\end{equation}
It is easy to verify that the integral expressions in \eqref{boringintegral3} match the bounds in \eqref{boringintegral1} and \eqref{boringintegral2} for the specific choices of $f(t) = \frac{1}{t}$ and $f(t) = \frac{1}{t^2}$, respectively.
\end{proof}

\end{document}